\documentclass[11pt]{article}
\usepackage{amsmath,amsthm,amsfonts,amssymb,amscd, amsxtra}

\oddsidemargin=0pt \evensidemargin=0pt \textwidth=6.5in

\headsep=1cm

\newcommand{\banacha}{\mathbb X}
\newcommand{\banachb}{\mathbb Y}

\newtheorem{theorem}{Theorem}
\newtheorem{lemma}[theorem]{Lemma}

\newtheorem{corollary}[theorem]{Corollary}
\newtheorem{proposition}[theorem]{Proposition}

\newtheorem{remark}{Remark}
\begin{document}
\title{Local convergence  of  inexact Gauss-Newton like methods for injective-overdetermined systems of equations \\ under a majorant condition}

\author{ M.L.N. Gon\c calves \thanks{IME/UFG, Campus II- Caixa
    Postal 131, CEP 74001-970 - Goi\^ania, GO, Brazil (E-mail:{\tt
      maxlng@ufg.br}).  The author was supported in part by CAPES, FAPEG/GO and CNPq Grant 471815/2012-8.} }
 \maketitle
\begin{abstract}
In this paper, inexact Gauss-Newton like methods for solving
injective-overdetermined systems of equations are studied.
We use a  majorant condition, defined by a function whose derivative is not necessarily convex, to
extend and improve  several existing results on the local convergence of the Gauss-Newton methods.
In particular, this analysis guarantees the convergence of the methods for two  important new cases.
\end{abstract}

\noindent {{\bf Keywords:} Injective-overdetermined systems of equations;
Inexact Gauss-Newton like methods; Majorant condition; Local convergence.}

\maketitle
\section{Introduction}\label{sec:int}
Let  $F$ be a continuously differentiable nonlinear function from a
open set  $\Omega$ in a real or complex Hilbert space $\banacha$ to
another  Hilbert space $\banachb$.  Consider the {\it systems of
nonlinear equations},
\begin{equation}\label{eq:p1}
F(x)=0.
\end{equation}

The Gauss-Newton method (see \cite{MAX2,MAX3,MAX5,MAX6}) is a generalized Newton method  for solving
such systems. It finds least-squares solutions of \eqref{eq:p1},
which may or may not be solutions of the original problem
\eqref{eq:p1}. These least-squares solutions  are stationary points
of the objective function of  the nonlinear least squares problem
$$
\min_{x\in \Omega } \;\|F(x)\|^2.
$$
This paper
is focused on the case in which the least-squares solutions of
\eqref{eq:p1}   also solve \eqref{eq:p1}. In the theory of nonlinear
least squares problems, this case is called the zero-residual case.

As it is well known, each Gauss-Newton iteration requires the
solution of a linear system involving $F'$. But this may be
computationally expensive, because in many cases even the
calculation of the derivative of F is hard to perform.
Thus, inexact variations of the method must be
considered for ensuring a good implementation. For more details on the different inexact versions of the Gauss-Newton and Newton methods, we refer the reader to \cite{AAA,MAX1,MAX3,FS1010,JM10,B10,1015} and the references therein.


Denote by $A^*$ the adjoint of the operator $A$. Formally, the
inexact Gauss-Newton like methods, which we will consider, are
described as follows: Given an initial point $x_0 \in {\Omega}$,
define
$$
x_{k+1}={x_k}+S_k,\qquad B(x_k)S_k=-F'(x_k)^*F(x_{k})+r_{k}, \qquad
k=0,1,\ldots,
$$
where  $B(x_k)$ is a suitable invertible approximation of the
derivative $F'(x_k)^*F'(x_{k})$, the residual tolerance,  $r_k$,
and the preconditioning invertible matrix, $P_{k}$, (considered for the first time in \cite{B10}) for the linear systems defining the step $S_k$  satisfy
$$
\|P_{k}r_{k}\|\leq \theta_{k}\|P_{k}F'(x_k)^*F(x_{k})\|,
$$
for a suitable forcing number $\theta_{k}$.
In particular, the above
process is  {the inexact modified Gauss-Newton method} if
$B(x_k)=F'(x_0)^*F'(x_0)$. It is {the Gauss-Newton like
method } if $\theta_{k}=0$, it corresponds to the { inexact Gauss-Newton
method} if $B_k=F'(x_k)^TF'(x_k)$, and it represents the {
Gauss-Newton method} if $\theta_{k}=0$ and $B_k=F'(x_k)^TF'(x_k)$.
It is also worth to  point out that if $F'(x)$ is invertible for all $x \in \Omega$, the inexact Gauss-Newton like methods become
{the inexact Newton-like methods}, which in particular include the inexact modified Newton, Newton-like, inexact Newton and Newton methods.

Results of convergence for the Gauss-Newton methods  have been discussed  by many authors, see for
example \cite{Argyros2010,Argyros,AAA,C11,F08,F10,MAX1,MAX2,MAX3,MAX4,FS1010,FS06,MAX5,MAX6,huangy2004,LZJ,JM10,B10,A3,1015,S86,XW10,XW9,10100}.
Recent research  attempts to alleviate  the assumption of
Lipschitz continuity on the operator~$F'$. For this, the main techniques used are  the majorant
condition, see for example  \cite{F08,F10, MAX1,MAX2,MAX3,MAX4,FS1010,FS06,MAX5,MAX6},
and the generalized Lipschitz condition according to X.Wang, see
for example \cite{AAA,C11,huangy2004, LZJ,1015,XW10,XW9,10100}.
But, if the derivative of the majorant function is not convex, these conditions are not equivalent and
the latter one can be seen as a particular case of the first one
(see the discussion on pp. 1522 of \cite{F10} or remark~4 in \cite{MAX5}). Moreover,  the majorant formulation provides a clear
relationship between the majorant function and the nonlinear
operator under consideration,  simplifying  the proof of convergence
substantially.

In what follows, under a milder majorant condition, we present a new
local convergence analysis of  inexact Gauss-Newton like methods
for solving \eqref{eq:p1}. The convergence for this family of
methods was obtained in \cite{MAX3} assuming that the derivative of
the majorant function is convex. In this paper an analogous result
is obtained without this hypothesis. We establish the well
definedness and the convergence, along with results on the
convergence rates.
The lack of convexity of the derivative of the majorant function here,
allows us to obtain two new important special cases, namely, the
convergence can be ensured under H\"{o}lder-like  and generalized
Lipschitz conditions.   In the latter case,  the results are
obtained without assuming that the function that defines the
condition is nondecreasing. Thus, Theorem~3.3 in
\cite{AAA} is generalized  for the zero-residual case. Moreover, it is worth to mention that
the  hypothesis of convex derivative of the majorant function or
nondecreasing of the function  which defines  the generalized
Lipschitz condition, are needed only to obtain the quadratic convergence
rate when the inexact Gauss-Newton like methods reduce to the
Gauss-Newton method.

The organization of the paper is as follows. Next, we list some notations and one basic result used in
our presentation.  In Section~\ref{lkant} we state the main result. In Section~\ref{sec:PR} some properties of a certain sequence associated to the majorant function
are established and the main relationships between the majorant
function and the nonlinear function $F$ are presented.  In Section
\ref{sec:proof} our main result is proven and some applications of
this result are obtained in Section~\ref{apl}. Some final remarks
are offered in Section~\ref{fr}.

\subsection{Notation and auxiliary results} \label{sec:int.1}
The following notations and results are used throughout our
presentation.   Let $\banacha$ and $\banachb$ be Hilbert spaces. The
open and closed balls at $a \in \banacha$ with radius $\delta>0$ are
denoted, respectively by
$$
B(a,\delta) :=\{ x\in \banacha ;\; \|x-a\|<\delta \}, \qquad
B[a,\delta] :=\{ x\in \banacha ;\; \|x-a\|\leqslant \delta \}.
$$
The set $\Omega\subseteq\banacha$ is an open set, the function
$F:\Omega\to \banachb$ is continuously differentiable, and $F'(x)$
has a closed image in $\Omega$. The condition number of a continuous
linear operator $A: \banacha \to \banachb$ is denoted by
$\mbox{cond}(A)$  and it is defined as
$$\mbox{cond}(A):=\|A^{-1}\|\|A\|.$$
Let $A: \banacha \to \banachb$ be a continuous and injective linear
operator with closed image. The Moore-Penrose inverse
$A^\dagger:\banachb \to \banacha$ of $A$ is defined by
$$
A^\dagger:=(A^*A)^{-1} A^*,
$$
where  $A^*$ denotes the adjoint of the linear operator $A$.

The next lemma  is proven in  \cite{G1} (see also,  \cite{W1}) for
an $m\times n$ matrix with $m\geq n$ and $rank(A)=rank(B)=n$. This
proof holds in a more general context as we will  state bellow.

\begin{lemma} \label{lem:ban2}
Let $A, B: \banacha \to \banachb$ be a continuous linear operator
with closed images. If $A$ is injective and $\|A^\dagger
\|\|A-B\|<1$, then $B$ is injective and $$ \|B^\dagger\|\leq
\frac{\|A^\dagger\|}{ 1- \|A^\dagger\|\|A-B\|}.
$$
\end{lemma}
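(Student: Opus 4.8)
The starting point is the explicit formula $A^\dagger=(A^*A)^{-1}A^*$, which is legitimate precisely because $A$ is injective with closed image (so $A^*A$ is a positive, boundedly invertible operator on $\banacha$). From this formula one reads off two facts that will do all the work: first, $A^\dagger A=(A^*A)^{-1}A^*A=I_{\banacha}$, so $A^\dagger$ is a bounded left inverse of $A$; second, the analogous identities for any injective operator $C$ with closed image give that $C^\dagger C=I$ and that $CC^\dagger$ is the orthogonal projection of $\banachb$ onto $\mathrm{Im}(C)$, hence $\|CC^\dagger\|\le 1$.

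The plan is then to proceed in three short steps. Step one: lower bound $\|Ax\|$. For every $x\in\banacha$, $\|x\|=\|A^\dagger Ax\|\le\|A^\dagger\|\,\|Ax\|$, so $\|Ax\|\ge\|x\|/\|A^\dagger\|$. Step two: transfer this to $B$ by perturbation. Writing $Bx=Ax-(A-B)x$ and using the reverse triangle inequality,
\[
\|Bx\|\ \ge\ \|Ax\|-\|(A-B)x\|\ \ge\ \Big(\tfrac{1}{\|A^\dagger\|}-\|A-B\|\Big)\|x\|\ =\ c\,\|x\|,
\qquad c:=\frac{1-\|A^\dagger\|\,\|A-B\|}{\|A^\dagger\|}.
\]
By hypothesis $\|A^\dagger\|\,\|A-B\|<1$, so $c>0$; in particular $Bx=0$ forces $x=0$, i.e. $B$ is injective, and (since $B$ is assumed to have closed image) $B^\dagger=(B^*B)^{-1}B^*$ is well defined. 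Step three: bound $\|B^\dagger\|$. Fix $y\in\banachb$ and put $x=B^\dagger y$. Then, using Step two and $\|BB^\dagger\|\le1$,
\[
\|B^\dagger y\|=\|x\|\ \le\ \frac{1}{c}\,\|Bx\|\ =\ \frac{1}{c}\,\|BB^\dagger y\|\ \le\ \frac{1}{c}\,\|y\|,
\]
which is exactly $\|B^\dagger\|\le 1/c=\|A^\dagger\|/(1-\|A^\dagger\|\,\|A-B\|)$.

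The only point that requires care — and the one I would flag as the main (modest) obstacle — is justifying, in the Hilbert-space rather than the matrix setting, the two structural properties of the Moore–Penrose inverse used above: that $C^\dagger C=I$ for injective $C$ with closed range, and that $CC^\dagger$ is an orthogonal projection (so has norm at most $1$). Both are immediate from the closed-range hypothesis together with the defining formula $C^\dagger=(C^*C)^{-1}C^*$, so no genuine functional-analytic subtlety arises; the remainder is the elementary perturbation estimate above, which is identical to the finite-dimensional argument of \cite{G1,W1}.
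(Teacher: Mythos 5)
Your proof is correct. Note, however, that the paper itself gives no argument for this lemma: it simply cites Stewart \cite{G1} and Wedin \cite{W1}, where the bound is proved for $m\times n$ matrices of full column rank, and asserts that ``this proof holds in a more general context.'' The classical matrix proof runs through singular values: one shows $\sigma_{\min}(B)\ge \sigma_{\min}(A)-\|A-B\|>0$ by Weyl-type perturbation and uses $\|B^\dagger\|=1/\sigma_{\min}(B)$. What you have written is the correct operator-theoretic analogue of exactly that argument, and it is genuinely self-contained in the Hilbert-space setting: the left-inverse identity $A^\dagger A=I$ gives the lower bound $\|Ax\|\ge\|x\|/\|A^\dagger\|$ (the stand-in for $\sigma_{\min}(A)\ge 1/\|A^\dagger\|$), the reverse triangle inequality transfers it to $\|Bx\|\ge c\|x\|$ with $c=(1-\|A^\dagger\|\|A-B\|)/\|A^\dagger\|>0$, and the facts that $B^*B$ is then boundedly invertible (it is self-adjoint with $\langle B^*Bx,x\rangle=\|Bx\|^2\ge c^2\|x\|^2$) and that $BB^\dagger$ is the orthogonal projection onto $\mathrm{Im}(B)$, hence of norm at most one, yield $\|B^\dagger y\|\le\|BB^\dagger y\|/c\le\|y\|/c$, which is the stated bound. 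Two small remarks: your lower bound actually shows that the image of $B$ is automatically closed, so the closed-image hypothesis on $B$ is not even needed for your argument; and the degenerate case $\|A^\dagger\|=0$ (possible only when $\banacha=\{0\}$) is trivial and harmless. In short, you have supplied precisely the generalization that the paper leaves to the reader, by a route equivalent in spirit to, but more elementary and more general than, the finite-dimensional singular-value proof in the cited references.
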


\section{Local analysis for inexact Gauss-Newton like methods } \label{lkant}
In this section we state and prove the main result of the paper. It consists of a local theorem for the inexact Gauss-Newton like methods for solving \eqref{eq:p1}.  First,  some results of a certain sequence associated to the  majorant function are obtained.
Then,  we establish the main relationships between the majorant function and the nonlinear function $F$.
Finally, we show the well definedness and convergence  of  inexact Gauss-Newton like methods, along with some results on the convergence rates.
 The statement of the theorem~is:

\begin{theorem}\label{th:nt}
Let $\banacha$ and $\banachb$ be  Hilbert spaces,
 $\Omega\subseteq \banacha$ be an open set and
$F:{\Omega}\to \banachb$ be a continuously differentiable
function such that $F'$  has a closed image in $\Omega$. Let $x_* \in \Omega,$ $R>0$,
$
\beta:=\|F'(x_*)^{\dagger}\| $ and $ \kappa:=\sup \left\{ t\in [0, R): B(x_*, t)\subset\Omega \right\}.
$
Suppose that $F(x_*)=0$,
$F '(x_*)$ is injective
and there exists a
continuously differentiable function $f:[0,\; R)\to \mathbb{R}$ such that
  \begin{equation}\label{Hyp:MH}
\beta\left\|F'(x)-F'(x_*+\tau(x-x_*))\right\| \leq
f'\left(\|x-x_*\|\right)-f'\left(\tau\|x-x_*\|\right),
  \end{equation}
  for  all $\tau \in [0,1]$,  $x\in B(x_*, \kappa)$  and
\begin{itemize}
  \item[{\bf h1)}]  $f(0)=0$ and $f'(0)=-1$;
  \item[{\bf  h2)}]  $f'$ is   strictly increasing.
\end{itemize}
Take $0\leq \vartheta<1$, $0\leq \omega_{2}<\omega_{1}$ such that  $\omega_{1}\vartheta+\omega_{2}<1 $.
Let $\nu  :=\sup \left\{t \in[0, R):f'(t)<0\right\},$
$$ \rho :=\sup \left\{\delta \in(0, \nu):{(1+\vartheta)\omega_{1}[f(t)/f'(t)-t]}/{t}+\omega_{1}\vartheta+\omega_{2}<1,\; t \,\in \,(0, \delta)\right\},\qquad
 r :=\min  \left\{\kappa, \, \rho \right\}.  $$
Consider the inexact Gauss-Newton like methods, with initial point $x_0\in
B(x_*, r)\backslash \{x_*\}$, defined by
$$
x_{k+1}={x_k}+S_k, \qquad  B(x_k)S_k=-F'(x_k)^*F(x_k)+r_{k}, \qquad
\; k=0,1,\ldots,
$$
where $B(x_k)$ is an invertible approximation of $F'(x_k)^*F'(x_k)$ satisfying the following conditions
$$
\|B(x_k)^{-1}F'(x_k)^*F'(x_k)\| \leq \omega_{1}, \qquad
\|B(x_k)^{-1}F'(x_k)^*F'(x_k)-I\| \leq \omega_{2}, \qquad \;
k=0,1,\ldots,
$$
and the residual tolerance, $r_k$, the forcing term, $\theta_k$,
and the preconditioning invertible matrix, $P_{k}$,  are such that
$$
\|P_{k}r_{k}\|\leq \theta_{k}\|P_{k}F'(x_{k})^*F(x_{k})\|, \qquad
0\leq \theta_{k}\mbox{cond}(P_{k}F'(x_{k})^*F'(x_{k}))\leq
\vartheta,\qquad \; k=0,1,\ldots.$$
 Define the scalar sequence
$\{t_k\}$, with initial point  $t_0=\|x_0-x_*\|$, by
$$
   t_{k+1} =(1+\vartheta)\omega_{1}|t_k-f(t_k)/f'(t_k)|+(\omega_{1}\vartheta+\omega_{2})t_k, \qquad
   k=0,1,\ldots\,.
$$
Then, the sequences $\{x_k\}$  and $\{t_k\}$ are well defined; $\{t_k\}$  is strictly decreasing, contained in $(0, r)$ and it converges to $0$. Furthermore, $\{x_k\}$ is contained in $B(x_*,r)$, it converges to  $x_*$ and there~hold:
$$
    \limsup_{k \to \infty} \; [{\|x_{k+1}-x_*\|}\big{/}{\|x_k-x_*\|}]\leq\omega_{1}\vartheta+\omega_{2},\qquad \lim_{k \to \infty}[{t_{k+1}}\big{/}{t_k}]=\omega_{1}\vartheta+\omega_{2}.
$$
If, additionally, given $0\leq p\leq1$
\begin{itemize}
  \item[{\bf  h3)}]  the function  $(0,\, \nu) \ni t \mapsto [f(t)/f'(t)-t]/t^{p+1}$ is  strictly increasing,
\end{itemize}
then the sequences  $\{x_k\}$  and $\{t_k\}$ satisfy
\begin{equation}\label{cd12}
\|x_{k+1}-x_*\| \leq
(1+\vartheta)\omega_{1}\left(\frac{f(t_0)-t_0f'(t_0)}{{t_0}^{p+1}f'(t_0)}\right)\|x_k-x_*\|^{p+1}+\left(\omega_{1}\vartheta+\omega_{2}\right){\|x_k-x_*\|},
\end{equation}
for all $\; k=0,1,\ldots,$ and
\begin{equation}\label{cd1}
\|x_{k}-x_*\|\leq t_k, \qquad k=0,1, \ldots.
\end{equation}
\end{theorem}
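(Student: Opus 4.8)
The plan is to follow the standard three-stage route for majorant-type local convergence theorems, arranged so that the inexactness carried by $B(x_k)$, $r_k$ and $P_k$ is tracked quantitatively. Throughout I would write $n(t):=f(t)/f'(t)-t$ and $\varphi(t):=(1+\vartheta)\omega_{1}\,n(t)+(\omega_{1}\vartheta+\omega_{2})t$, with $n(0):=0=:\varphi(0)$, so that $\{t_k\}$ is exactly the orbit $t_{k+1}=\varphi(t_k)$; on $(0,\nu)$ one has $f'<0$ and $|t-f(t)/f'(t)|=n(t)$. First I would collect the scalar facts from \textbf{h1}--\textbf{h2}. Since $f'(0)=-1$ and $f'$ is continuous, $\nu>0$; $\kappa>0$ because $\Omega$ is open, so $r>0$. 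Writing $N(t):=f(t)-tf'(t)$, the estimates $\int_{t_1}^{t_2}f'(s)\,ds\le(t_2-t_1)f'(t_2)$ and $t_2f'(t_2)-t_1f'(t_1)=t_1(f'(t_2)-f'(t_1))+(t_2-t_1)f'(t_2)$ give $N(t_2)-N(t_1)\le-t_1(f'(t_2)-f'(t_1))\le0$, so $N$ is nonincreasing and, as $N(0)=0$, nonpositive; hence $n(t)=N(t)/f'(t)=|N(t)|/|f'(t)|$ is a nondecreasing nonnegative function over a positive decreasing one, so $n$, and with it $\varphi$, is nondecreasing on $[0,\nu)$ and $\varphi>0$ on $(0,\nu)$ (as $\omega_1>\omega_2\ge0$). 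Also $|N(t)|=\int_0^t(f'(t)-f'(s))\,ds\le t(f'(t)+1)$, so $0\le n(t)/t\le(f'(t)+1)/|f'(t)|\to0$ as $t\to0^+$; combined with $\omega_1\vartheta+\omega_2<1$ this forces $\rho>0$, and by the definition of $\rho$ one has $\varphi(t)<t$ on $(0,\rho)\supseteq(0,r)$. These facts immediately give that $\{t_k\}$, with $t_0=\|x_0-x_*\|\in(0,r)$, is well defined, strictly decreasing, contained in $(0,r)$, and convergent to the unique fixed point $0$ of the continuous map $\varphi$ in $[0,\rho)$, and that $t_{k+1}/t_k=(1+\vartheta)\omega_1\,n(t_k)/t_k+\omega_1\vartheta+\omega_2\to\omega_1\vartheta+\omega_2$.

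Next I would establish the two relations between $f$ and $F$ on $B(x_*,r)$. For $x$ with $u:=\|x-x_*\|\in(0,\nu)$, taking $\tau=0$ in \eqref{Hyp:MH} and using $f'(0)=-1$ gives $\|F'(x_*)^\dagger\|\,\|F'(x_*)-F'(x)\|\le f'(u)+1<1$, so Lemma~\ref{lem:ban2} shows $F'(x)$ is injective with closed image and $\|F'(x)^\dagger\|\le\beta/(-f'(u))$. Using $F(x_*)=0$, $F(x)=\int_0^1F'(x_*+\tau(x-x_*))(x-x_*)\,d\tau$ and $x-x_*=F'(x)^\dagger F'(x)(x-x_*)$, one has $E:=F'(x)^\dagger F(x)-(x-x_*)=\int_0^1F'(x)^\dagger\bigl[F'(x_*+\tau(x-x_*))-F'(x)\bigr](x-x_*)\,d\tau$, and then \eqref{Hyp:MH} with the bound on $\|F'(x)^\dagger\|$ yields $\|E\|\le\frac{1}{-f'(u)}\bigl[uf'(u)-f(u)\bigr]=n(u)$, hence $\|F'(x)^\dagger F(x)\|\le n(u)+u=f(u)/f'(u)$.

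The heart of the proof is the induction that $x_k$ is well defined, $x_k\in B(x_*,r)$ and $\|x_k-x_*\|\le t_k$. The case $x_k=x_*$ is trivial ($F(x_k)=0$ forces $r_k=0$, hence $S_k=0$, and the iteration stays at $x_*$), so I may assume $u_k:=\|x_k-x_*\|\in(0,t_k]\subset(0,r)$. By the previous paragraph and the invertibility of $B(x_k)$, $x_{k+1}$ is well defined; setting $G_k:=B(x_k)^{-1}F'(x_k)^*F'(x_k)$ (so $\|G_k\|\le\omega_1$, $\|I-G_k\|\le\omega_2$) and using $F'(x_k)^*F'(x_k)F'(x_k)^\dagger=F'(x_k)^*$ one gets
$$x_{k+1}-x_*=(I-G_k)(x_k-x_*)-G_kE_k+B(x_k)^{-1}r_k,\qquad E_k:=F'(x_k)^\dagger F(x_k)-(x_k-x_*).$$
For the residual, $r_k=P_k^{-1}(P_kr_k)$ gives $(F'(x_k)^*F'(x_k))^{-1}r_k=(P_kF'(x_k)^*F'(x_k))^{-1}(P_kr_k)$, so the two hypotheses on $P_k,r_k,\theta_k$, with $\|P_kF'(x_k)^*F(x_k)\|\le\|P_kF'(x_k)^*F'(x_k)\|\,\|F'(x_k)^\dagger F(x_k)\|$, give $\|(F'(x_k)^*F'(x_k))^{-1}r_k\|\le\theta_k\,\mbox{cond}(P_kF'(x_k)^*F'(x_k))\,\|F'(x_k)^\dagger F(x_k)\|\le\vartheta\|F'(x_k)^\dagger F(x_k)\|$, whence $\|B(x_k)^{-1}r_k\|\le\omega_1\vartheta\,\|F'(x_k)^\dagger F(x_k)\|\le\omega_1\vartheta(n(u_k)+u_k)$. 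Together with $\|E_k\|\le n(u_k)$,
$$\|x_{k+1}-x_*\|\le\omega_2u_k+\omega_1n(u_k)+\omega_1\vartheta(n(u_k)+u_k)=(1+\vartheta)\omega_1n(u_k)+(\omega_1\vartheta+\omega_2)u_k=\varphi(u_k),$$
and since $\varphi$ is nondecreasing and $u_k\le t_k$, $\|x_{k+1}-x_*\|\le\varphi(t_k)=t_{k+1}<t_k<r$, closing the induction. From $\|x_k-x_*\|\le t_k\to0$ I get $x_k\to x_*$ and \eqref{cd1}; dividing $\|x_{k+1}-x_*\|\le\varphi(u_k)$ by $u_k$ and using $n(u_k)/u_k\to0$ gives $\limsup_k\|x_{k+1}-x_*\|/\|x_k-x_*\|\le\omega_1\vartheta+\omega_2$ (the companion statement for $\{t_k\}$ came from the first paragraph); and under \textbf{h3}, $n(t)/t^{p+1}$ increasing together with $u_k\le t_0$ gives $n(u_k)\le\bigl((f(t_0)-t_0f'(t_0))/(t_0^{p+1}f'(t_0))\bigr)u_k^{p+1}$, which substituted into the displayed bound for $\|x_{k+1}-x_*\|$ produces \eqref{cd12}.

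The step I expect to be the main obstacle is the monotonicity of $n(t)=f(t)/f'(t)-t$ in the first paragraph: this is exactly where \textbf{h2} (strict monotonicity of $f'$) takes over from the convexity of $f'$ assumed in \cite{MAX3}, and the argument has to avoid differentiating $f'$ — the clean route is the observation that $N(t)=f(t)-tf'(t)$ is nonincreasing and nonpositive, so that $n=|N|/|f'|$ is a nondecreasing-over-decreasing quotient. The other delicate point is the bookkeeping in the last paragraph: one must group the error as $(I-G_k)(x_k-x_*)-G_kE_k+B(x_k)^{-1}r_k$ rather than peel off a full Gauss--Newton step, and route the residual estimate through $\mbox{cond}(P_kF'(x_k)^*F'(x_k))$, in order to land precisely on the coefficient $(1+\vartheta)\omega_1$ built into the definition of $\{t_k\}$; a coarser grouping yields $(1+\omega_1\vartheta+\omega_2)$ there and breaks the comparison $\|x_k-x_*\|\le t_k$.
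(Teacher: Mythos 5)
Your proof is correct, and its overall architecture coincides with the paper's: the bound $\|F'(x)^\dagger\|\le\beta/|f'(\|x-x_*\|)|$ via Lemma~\ref{lem:ban2} (the paper's Lemma~\ref{wdns}), the linearization-error bound $\|E_k\|\le n(u_k)$ (Lemmas~\ref{pr:taylor} and \ref{passonewton}, merged), the decomposition $x_{k+1}-x_*=(I-G_k)(x_k-x_*)-G_kE_k+B(x_k)^{-1}r_k$ and the residual estimate routed through $\mbox{cond}(P_kF'(x_k)^*F'(x_k))$ (exactly the paper's Lemma~\ref{l:wdef} and inequality \eqref{tt1}). The one genuine divergence is how you obtain the majorization $\|x_k-x_*\|\le t_k$: you prove that $N(t)=f(t)-tf'(t)$ is nonincreasing (hence nonpositive) using only {\bf h2}, conclude that $|n_f|=|N|/|f'|$ and therefore $\varphi$ are nondecreasing on $[0,\nu)$, and close the induction by monotonicity of $\varphi$; the paper instead invokes {\bf h3} through Proposition~\ref{pr:incr33} (monotonicity of $|n_f(t)|/t^{p+1}$) to compare $|n_f(u_k)|$ with $|n_f(t_k)|$, so it only asserts \eqref{cd1} under {\bf h3}. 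Your route is slightly stronger — it delivers \eqref{cd1} without {\bf h3} — and your $N$-argument is a clean way to extract monotonicity without differentiating $f'$. Two small points to tighten: (i) to guarantee $\{t_k\}\subset(0,r)$ when $\omega_1\vartheta+\omega_2=0$ you need $n(t)>0$ strictly on $(0,\nu)$, which follows from $|N(t)|=\int_0^t(f'(t)-f'(s))\,ds>0$ by the \emph{strict} increase of $f'$ (the paper's Proposition~\ref{pr:incr1}); your parenthetical ``as $\omega_1>\omega_2\ge0$'' does not by itself cover that case; (ii) the degenerate branch $x_k=x_*$ that you dispose of at the start is excluded in the paper by taking $x_0\ne x_*$ and by the strict inequality $\varphi(u)<u$, but your explicit treatment is harmless.
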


The above theorem extends and improves previous results of the
Gauss-Newton methods for solving injective-overdetermined systems of equations, in particular those in~\cite{AAA,F10,MAX3,MAX5}.

\begin{remark}
In particular,  we obtain, from  Theorem~\ref{th:nt},  the
convergence for inexact modified Gauss-Newton method if
$B(x_k)=F'(x_0)^*F'(x_0)$, Gauss-Newton like method if
$\vartheta=0$, inexact Gauss-Newton method if $\omega_1=1$ and
$\omega_2=0$,   and Gauss-Newton method if $\vartheta=0$,
$\omega_1=1$ and $\omega_2=0$. In this latter case, it is possible to prove
that  $r$ is the optimal convergence radius, see Theorem~2 in~\cite{MAX5}.
\end{remark}

\begin{remark}
If $F'(x_*)$ is invertible,  we obtain, from Theorem~\ref{th:nt},
the local convergence of the inexact Newton-Like methods for solving
nonlinear equations, which in particular imply the
convergence for the inexact modified Newton,  Newton-like, inexact
Newton and Newton  methods. In the last case, Theorem~\ref{th:nt} is similar to  Theorem~2 in \cite{F10}.
\end{remark}

\begin{remark}
In particular, if $f'$ is convex,  we can prove that {\bf h3} holds
for $p=1$ and, therefore, in this case, we are led to the result
proven in Theorem~7  of \cite{MAX3} with $c=0$.   Thus, the additional assumption that the majorant
function, $f$,  has convex derivative, is only necessary  in order to
obtain the inequality in \eqref{cd12} and \eqref{cd1}, which for the  Gauss-Newton method imply quadratic convergence rate.
More details on the convergence of the Gauss-Newton and Newton methods under a weaker majorant condition can be found in \cite{F10,MAX5}.
\end{remark}


From now on, we assume that all the assumptions of Theorem \ref{th:nt}
hold, with the exception of {\bf h3},  which will be considered to hold only when explicitly stated.
\subsection{Preliminary results} \label{sec:PR}
In this section, we will prove all the statements in Theorem~\ref{th:nt} regarding  the sequence $\{t_k\}$ associated to the  majorant function. The main relationships between the  majorant function  and the nonlinear operator will be also established.
\subsubsection{The scalar sequence} \label{sec:PMF}
In this part,  we will check the statements in Theorem~\ref{th:nt}
involving  $\{t_k\}$. We begin by proving  that $ \kappa$ and $\nu$ are positive.

\begin{proposition}  \label{pr:incr1}
The constants $ \kappa$ and $\nu $ are positive and $t-f(t)/f'(t)<0,$ for all $t\in (0,\,\nu).$
\end{proposition}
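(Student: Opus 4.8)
The plan is to verify the three assertions separately, each by an elementary argument. For $\kappa>0$, one uses only that $\Omega$ is open with $x_*\in\Omega$: there is $\delta>0$ with $B(x_*,\delta)\subset\Omega$, and after replacing $\delta$ by $\min\{\delta,R/2\}$ we may assume $\delta\in(0,R)$, so $\delta$ lies in the set whose supremum defines $\kappa$; hence $\kappa\geq\delta>0$. For $\nu>0$, note that $f$ is continuously differentiable, so $f'$ is continuous on $[0,R)$, and $f'(0)=-1<0$ by {\bf h1}; by continuity there is $\delta\in(0,R)$ with $f'(t)<0$ for all $t\in[0,\delta)$, whence $\nu\geq\delta>0$.

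For the inequality $t-f(t)/f'(t)<0$ on $(0,\nu)$, I would first record that $f'(t)<0$ for every $t\in(0,\nu)$: since $f'$ is strictly increasing by {\bf h2}, the set $\{s\in[0,R):f'(s)<0\}$ is an interval containing $0$, and for any $t<\nu$ one can pick $s$ in that set with $t<s$, so that $f'(t)<f'(s)<0$. Next, using $f(0)=0$ from {\bf h1} and the fundamental theorem of calculus,
$$
f(t)=\int_0^t f'(s)\,ds,\qquad\text{hence}\qquad f(t)-t\,f'(t)=\int_0^t\bigl(f'(s)-f'(t)\bigr)\,ds .
$$
By {\bf h2}, $f'(s)-f'(t)<0$ for every $s\in(0,t)$; since this integrand is continuous and strictly negative on an interval of positive length, $f(t)-t\,f'(t)<0$, i.e. $f(t)<t\,f'(t)$. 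Dividing by $f'(t)<0$ reverses the inequality, giving $f(t)/f'(t)>t$, that is, $t-f(t)/f'(t)<0$, as claimed.

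There is no genuine obstacle here; the proposition is a warm-up establishing that the quantities appearing in the statement of Theorem~\ref{th:nt} are well posed. The only points that need a little care are making sure the auxiliary radii can be taken inside $[0,R)$, and justifying $f'(t)<0$ throughout all of $(0,\nu)$ (not merely near $\nu$), for which the strictness in {\bf h2} is exactly what is used; the same strictness is what upgrades the integral estimate to a strict inequality.
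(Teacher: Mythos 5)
Your proof is correct and follows essentially the same route as the paper: positivity of $\kappa$ and $\nu$ from openness of $\Omega$ and continuity of $f'$ with $f'(0)=-1$, then the key inequality $f(t)-tf'(t)<0$ divided by $f'(t)<0$. Your fundamental-theorem-of-calculus computation is simply an explicit derivation of the strict-convexity inequality $0=f(0)>f(t)-tf'(t)$ that the paper invokes directly from {\bf h2}.
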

\begin{proof}
Since $\Omega$ is open and $x_*\in \Omega$, we can immediately conclude that $\kappa>0$. As $f'$ is continuous in $0$ with  $f'(0)=-1$, there exists $\delta>0$ such that $f'(t)<0$ for all $t\in (0,\, \delta).$ Hence,  $\nu>0$.

It remains to show that $t-f(t)/f'(t)<0,$ for all $t\in (0,\,\nu).$ Since $f'$ is strictly increasing,  $f$ is strictly convex. So,
$
0=f(0)>f(t)-tf'(t),
$
for $t\in  (0,\, R).$
If $t\in (0, \,\nu)$ then $f'(t)<0$, which, combined with the last inequality, yields the desired inequality.
\end{proof}
According to {\bf h2} and the definition of $\nu$, we have  $f'(t)< 0$ for all
$t\in[0, \,\nu)$.  Therefore, the Newton iteration map for  $f$ is well defined in
$[0,\, \nu)$. Let us call it $n_f$:
\begin{equation} \label{eq:def.nf}
  \begin{array}{rcl}
  n_f:[0,\, \nu)&\to& (-\infty, \, 0]\\
    t&\mapsto& t-f(t)/f'(t).
  \end{array}
\end{equation}
\begin{proposition}  \label{pr:incr3}
$
\lim_{t\to 0}|n_f(t)|/t=0.
$
As a consequence,  $\rho>0 $ and
\begin{equation}\label{eq:001}
0<\omega_{1}(1+\vartheta)|n_f(t)|+(\omega_{1}\vartheta+\omega_{2})t<t,\qquad t\in (0, \, \rho).
\end{equation}
\end{proposition}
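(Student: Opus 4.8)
The plan is to first establish the limit $\lim_{t\to 0}|n_f(t)|/t=0$ and then deduce from it both the positivity of $\rho$ and the two-sided inequality \eqref{eq:001}. For the limit, I would write $|n_f(t)|/t = |t-f(t)/f'(t)|/t = |tf'(t)-f(t)|/(t|f'(t)|)$. By Proposition \ref{pr:incr1} the numerator $tf'(t)-f(t)$ is negative on $(0,\nu)$, and $f'(t)<0$ there, so this equals $(f(t)-tf'(t))/(tf'(t)) = f(t)/(tf'(t)) - 1$. Equivalently, since $n_f(t)\le 0$, we have $|n_f(t)|/t = -(f(t)/f'(t)-t)/t = 1 - f(t)/(tf'(t))$. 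Using {\bf h1}, $f(0)=0$ and $f'(0)=-1$, so by continuity $f'(t)\to -1$ and the differentiability of $f$ at $0$ gives $f(t)/t \to f'(0) = -1$; hence $f(t)/(tf'(t)) \to (-1)/(-1) = 1$, and therefore $|n_f(t)|/t \to 0$ as $t\to 0^+$.

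Next, to see that $\rho>0$, recall $\rho$ is the supremum of those $\delta\in(0,\nu)$ for which $(1+\vartheta)\omega_1[f(t)/f'(t)-t]/t + \omega_1\vartheta+\omega_2 < 1$ holds for all $t\in(0,\delta)$. Rewriting $[f(t)/f'(t)-t]/t = |n_f(t)|/t$ (valid since $f(t)/f'(t)-t = -n_f(t) = |n_f(t)| \ge 0$ on $(0,\nu)$), the defining condition becomes $(1+\vartheta)\omega_1 |n_f(t)|/t + \omega_1\vartheta+\omega_2 < 1$. Since $|n_f(t)|/t \to 0$ and $\omega_1\vartheta+\omega_2 < 1$ by hypothesis, the left-hand side tends to $\omega_1\vartheta+\omega_2 < 1$ as $t\to 0^+$; hence there is $\delta>0$ for which the strict inequality holds throughout $(0,\delta)$, so $\rho \ge \delta > 0$.

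Finally, for \eqref{eq:001}, fix $t\in(0,\rho)$. By the definition of $\rho$ (and monotonicity of the supremum), the inequality $(1+\vartheta)\omega_1|n_f(t)|/t + \omega_1\vartheta+\omega_2 < 1$ holds; multiplying through by $t>0$ gives $\omega_1(1+\vartheta)|n_f(t)| + (\omega_1\vartheta+\omega_2)t < t$, which is the right-hand inequality. The left-hand (strict) inequality $0 < \omega_1(1+\vartheta)|n_f(t)| + (\omega_1\vartheta+\omega_2)t$ follows because both terms are nonnegative, $\omega_1 > \omega_2 \ge 0$ forces $\omega_1>0$, $1+\vartheta>0$, and $n_f(t)\neq 0$ on $(0,\nu)$ by Proposition \ref{pr:incr1} (indeed $t - f(t)/f'(t) < 0$ there, so $|n_f(t)| > 0$); hence the first term is already strictly positive. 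This completes the proof. The only mildly delicate point is the justification of $f(t)/t \to -1$, which is exactly the statement that $f$ is differentiable at $0$ with $f'(0)=-1$ — immediate from {\bf h1} and the hypothesis that $f$ is continuously differentiable on $[0,R)$.
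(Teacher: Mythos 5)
Your argument is correct and follows essentially the same route as the paper: reduce $|n_f(t)|/t$ to $f(t)/(tf'(t))-1$, let $t\to 0$ using $f(0)=0$ and $f'(0)=-1$ to get the limit $0$, and then read off $\rho>0$ and \eqref{eq:001} from the definition of $\rho$ together with the strict negativity of $n_f$ from Proposition~\ref{pr:incr1}. One bookkeeping correction: Proposition~\ref{pr:incr1} gives $f(t)-tf'(t)<0$, so the numerator $tf'(t)-f(t)$ is \emph{positive} on $(0,\nu)$, and the correct identity is $|n_f(t)|/t=[f(t)/f'(t)-t]/t=f(t)/(tf'(t))-1$ (which you do use in the $\rho>0$ step), not $1-f(t)/(tf'(t))$ as in your ``equivalently'' sentence; these two sign slips cancel in your first chain and the limit is $0$ either way, so nothing downstream is affected.
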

\begin{proof}
Using the definition \eqref{eq:def.nf},  Proposition \ref{pr:incr1},  $f(0)=0$, and the definition of $\nu$, some algebraic manipulations give
\begin{equation} \label{eq:rho}
\frac{|n_f(t)|}{t}= [f(t)/f'(t)-t]/t=\frac{1}{f'(t)} \frac{f(t)-f(0)}{t-0}-1, \qquad t\in (0,\,\nu).
\end{equation}
As  $f'(0)=-1\neq 0$, the first statement follows by taking limit in~\eqref{eq:rho}, as $t$ goes to $0$.

Now, since $[1-(\omega_{1}\vartheta+\omega_{2})]/[\omega_{1}(1+\vartheta)]>0$, using the first statement of the proposition and \eqref{eq:rho},
 we conclude that there exists a
$\delta>0$ such that
$$
0<[f(t)/(tf'(t))-1]<[1-(\omega_{1}\vartheta+\omega_{2})]/[\omega_{1}(1+\vartheta)],
 \qquad   t\in (0, \delta),
$$
or, equivalently,
$$
0<\omega_{1}(1+\vartheta)[f(t)/f'(t)-t]/{t}+\omega_{1}\vartheta+\omega_{2}<1,
\qquad  t\in (0, \delta).
$$
Therefore,   $\rho$  is positive.

To prove \eqref{eq:001}, it is enough to use the definition of $\rho$ and  \eqref{eq:rho}.
\end{proof}
Using \eqref{eq:def.nf}, it is easy to see that  the sequence $\{t_k \}$ is equivalently defined as
\begin{equation} \label{eq:tknk}
 t_0=\|x_0-x_*\|, \qquad t_{k+1}=\omega_{1}(1+\vartheta)|n_f(t_k)|+(\omega_{1}\vartheta+\omega_{2})t_k, \qquad k=0,1,\ldots\, .
\end{equation}
\begin{corollary} \label{cr:kanttk}
The sequence $\{t_k\}$ is well defined,  strictly decreasing and contained in $(0, \rho)$. Moreover,  $\{t_k\}$ converges to $0$ with linear rate, i.e.,
$
\lim_{k\to \infty}t_{k+1}/t_k=\omega_{1}\vartheta+\omega_{2}.
$
\end{corollary}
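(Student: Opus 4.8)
The plan is to run one induction showing $t_k\in(0,\rho)$ for all $k$, and then to extract the monotonicity, the limit, and the asymptotic ratio directly from Proposition~\ref{pr:incr3}.

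First I would settle the base case and the inductive step simultaneously. Since $x_0\in B(x_*,r)\backslash\{x_*\}$ and $r=\min\{\kappa,\rho\}\le\rho$, the starting value satisfies $t_0=\|x_0-x_*\|\in(0,\rho)$. Assume $t_k\in(0,\rho)$. As $\rho\le\nu$ (by the very definition of $\rho$), the Newton map $n_f$ is defined at $t_k$, so $t_{k+1}$ given by \eqref{eq:tknk} is well defined; moreover \eqref{eq:001} evaluated at $t=t_k$ gives
$$
0<t_{k+1}=\omega_{1}(1+\vartheta)|n_f(t_k)|+(\omega_{1}\vartheta+\omega_{2})t_k<t_k<\rho .
$$
This closes the induction and at the same time proves that $\{t_k\}$ is well defined, strictly decreasing, and contained in $(0,\rho)$.

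Next, being decreasing and bounded below by $0$, the sequence converges to some $\ell\in[0,\rho)$ (indeed $\ell<t_0<\rho$). To rule out $\ell>0$, I would pass to the limit in \eqref{eq:tknk}: since $f$ is continuously differentiable and $f'<0$ on $[0,\nu)$, the map $n_f$ is continuous on $[0,\nu)$, so if $\ell>0$ then $\ell=\omega_{1}(1+\vartheta)|n_f(\ell)|+(\omega_{1}\vartheta+\omega_{2})\ell$, contradicting the strict inequality in \eqref{eq:001} applied at $t=\ell\in(0,\rho)$. Hence $\ell=0$, i.e. $\{t_k\}$ converges to $0$.

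Finally, dividing \eqref{eq:tknk} by $t_k$ yields
$$
\frac{t_{k+1}}{t_k}=\omega_{1}(1+\vartheta)\,\frac{|n_f(t_k)|}{t_k}+(\omega_{1}\vartheta+\omega_{2}),
$$
and the first assertion of Proposition~\ref{pr:incr3}, namely $\lim_{t\to0}|n_f(t)|/t=0$, together with $t_k\to0$, forces $|n_f(t_k)|/t_k\to0$ and therefore $\lim_{k\to\infty}t_{k+1}/t_k=\omega_{1}\vartheta+\omega_{2}$. The only delicate point is the bookkeeping that keeps every $t_k$ inside the interval $(0,\rho)$ on which \eqref{eq:001} is available; beyond that, all the work has already been carried out in Propositions~\ref{pr:incr1} and~\ref{pr:incr3}, so I do not anticipate a genuine obstacle here.
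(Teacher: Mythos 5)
Your proof is correct and follows essentially the same route as the paper: induction via Proposition~\ref{pr:incr3} and \eqref{eq:001} to keep $\{t_k\}$ in $(0,\rho)$ and strictly decreasing, a limit-passing argument with the continuity of $n_f$ to force the limit to be $0$, and division of \eqref{eq:tknk} by $t_k$ together with $\lim_{t\to0}|n_f(t)|/t=0$ for the linear rate. The only difference is that you spell out the induction that the paper dismisses as "simple to conclude," which is harmless.
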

\begin{proof}
Since $0<t_0=\|x_0-x_*\|<r\leq\rho$, using  Proposition~\ref{pr:incr3} and \eqref{eq:tknk}  it is simple to conclude that $\{t_k\}$ is well defined, strictly decreasing and contained in $(0, \rho)$. So, the first statement of the corollary holds.

Due to  $\{t_k \}\subset (0, \rho)$ is  strictly decreasing,  it
converges. So,  $\lim_{k\to \infty}t_{k}=t_*$ with $0\leq t_*<\rho$,
which, together with \eqref{eq:tknk}, implies $0\leq
t_{*}=\omega_{1}(1+\vartheta)|n_f(t_*)|+(\omega_{1}\vartheta+\omega_{2})t_*$.
But, if $t_*\neq 0$,  Proposition~\ref{pr:incr3}  implies
$\omega_{1}(1+\vartheta)|n_f(t_*)|+(\omega_{1}\vartheta+\omega_{2})t_*<t_*$,
hence $t_*=0$. Therefore, $t_k \rightarrow 0$.

 Now, using  $\lim_{k\to \infty}t_{k}=0$, the definition of $\{t_k\}$ in  \eqref{eq:tknk} and the first statement in Proposition~\ref{pr:incr3},
 we obtain that  $\lim_{k\to \infty}t_{k+1}/t_k=\lim_{k\to \infty}\omega_{1}(1+\vartheta)|n_f(t_k)|/t_k+\omega_{1}\vartheta+\omega_{2}=\omega_{1}\vartheta+\omega_{2}$.
  Hence, the linear rate is proved.
\end{proof}
\begin{proposition}  \label{pr:incr33}
If {\bf  h3} holds,  the function  $(0,\, \nu) \ni t \mapsto
|n_f(t)|/t^{p+1}$ is strictly increasing.
\end{proposition}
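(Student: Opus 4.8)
The plan is to reduce the claim directly to hypothesis \textbf{h3} by rewriting $|n_f(t)|$ in the precise form that appears there. The key observation is that the two functions involved differ only by a change of notation, once the sign of $n_f$ on $(0,\nu)$ is pinned down, and that sign is already available from Proposition~\ref{pr:incr1}.

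First I would recall from Proposition~\ref{pr:incr1} that $t-f(t)/f'(t)<0$ for every $t\in(0,\nu)$; equivalently, in the notation \eqref{eq:def.nf}, $n_f(t)<0$ on $(0,\nu)$. Hence $|n_f(t)|=-n_f(t)=f(t)/f'(t)-t$ for all $t\in(0,\nu)$. Dividing this identity by $t^{p+1}$, which is legitimate since $t>0$, yields
\[
\frac{|n_f(t)|}{t^{p+1}}=\frac{f(t)/f'(t)-t}{t^{p+1}},\qquad t\in(0,\nu).
\]
The right-hand side is exactly the function that \textbf{h3} assumes to be strictly increasing on $(0,\nu)$, so the left-hand side is strictly increasing as well, which is the assertion. (Alternatively, one may invoke the formula \eqref{eq:rho} in place of Proposition~\ref{pr:incr1}, since it records the same information about $|n_f(t)|/t$, and then multiply through by $t$ and divide by $t^{p+1}$.)

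There is no genuine obstacle in this proof: it is essentially a bookkeeping step, the only point requiring any attention being the negativity of $n_f$ on $(0,\nu)$, and that has been established beforehand. The substance of the quadratic-type convergence estimate is encapsulated in \textbf{h3} itself; this proposition merely transcribes it into the form in which it will be used in the convergence analysis of Section~\ref{sec:proof}.
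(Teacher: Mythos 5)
Your proof is correct and follows the same route as the paper: use Proposition~\ref{pr:incr1} to get $n_f(t)<0$ on $(0,\nu)$, so $|n_f(t)|=f(t)/f'(t)-t$, and then the claim is a direct restatement of \textbf{h3}. No issues.
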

\begin{proof}
As $t-f(t)/f'(t)<0$ for all $t\in (0,\,\nu)$ (see Proposition~\ref{pr:incr1}),
the  result is an immediate consequence of
{\bf h3} and the definition in~\eqref{eq:def.nf}.
\end{proof}
\subsubsection{Relationship of the majorant function with the nonlinear function} \label{sec:MFNLO}
In this part we present the main relationships between the majorant
function, $f$, and the nonlinear function, $F$.
\begin{lemma} \label{wdns}
If \,$\| x-x_*\|<\min\{\nu,\kappa\}$, then
$F'(x)^* F'(x) $ is invertible and
$$
\left\|F'(x)^{\dagger}\right\|\leq {\beta}/{|f'(\|
x-x_*\|)|}.
 $$
In particular, $F'(x)^* F'(x)$ is invertible in $B(x_*, r)$.
\end{lemma}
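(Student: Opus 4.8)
The plan is to obtain both conclusions directly from the perturbation Lemma~\ref{lem:ban2}, applied with $A=F'(x_*)$ and $B=F'(x)$, so that the only real work is to bound $\|F'(x_*)-F'(x)\|$ by means of the majorant condition. First I would put $\tau=0$ in \eqref{Hyp:MH} and invoke {\bf h1} ($f'(0)=-1$) to get, for every $x\in B(x_*,\kappa)$,
$$
\beta\left\|F'(x)-F'(x_*)\right\|\leq f'(\|x-x_*\|)-f'(0)=f'(\|x-x_*\|)+1 .
$$
Since $f'$ is strictly increasing with $f'(0)=-1$ ({\bf h1}, {\bf h2}) and $\|x-x_*\|<\nu$, the definition of $\nu$ gives $-1=f'(0)\le f'(\|x-x_*\|)<0$, so the right-hand side lies in $[0,1)$. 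In particular $\|F'(x_*)^{\dagger}\|\,\|F'(x_*)-F'(x)\|=\beta\|F'(x_*)-F'(x)\|<1$.

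Next I would apply Lemma~\ref{lem:ban2}: $F'(x_*)$ is injective with closed image (by hypothesis) and $F'(x)$ has closed image in $\Omega$, so the lemma yields that $F'(x)$ is injective and
$$
\left\|F'(x)^{\dagger}\right\|\leq \frac{\beta}{1-\beta\|F'(x_*)-F'(x)\|}.
$$
Since $s\mapsto \beta/(1-s)$ is nondecreasing on $[0,1)$ and $\beta\|F'(x_*)-F'(x)\|\leq f'(\|x-x_*\|)+1<1$, I bound the denominator from below by $1-\bigl(f'(\|x-x_*\|)+1\bigr)=-f'(\|x-x_*\|)=|f'(\|x-x_*\|)|>0$, which gives the claimed estimate $\|F'(x)^{\dagger}\|\leq \beta/|f'(\|x-x_*\|)|$. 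Because $F'(x)$ is injective with closed image, $F'(x)^*F'(x)$ is invertible and $F'(x)^{\dagger}=(F'(x)^*F'(x))^{-1}F'(x)^*$, which is exactly the first assertion.

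For the ``in particular'' statement, I would only observe that $\rho\leq \nu$ by its definition and $r=\min\{\kappa,\rho\}$, hence $B(x_*,r)\subseteq B(x_*,\min\{\nu,\kappa\})$, so the first part applies on all of $B(x_*,r)$. There is essentially no obstacle here: the sole point needing a little care is the \emph{strict} inequality $\beta\|F'(x_*)-F'(x)\|<1$ required by Lemma~\ref{lem:ban2}, which rests on $f'(t)>-1$ for $t\in(0,\nu)$ (immediate from strict monotonicity of $f'$ and $f'(0)=-1$), together with the monotonicity of $s\mapsto \beta/(1-s)$ used to pass from the raw perturbation bound to the clean bound in terms of $f'$.
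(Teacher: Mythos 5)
Your proof is correct and follows essentially the same route as the paper: take $\tau=0$ in \eqref{Hyp:MH}, use {\bf h1} and the definition of $\nu$ to get $\beta\|F'(x)-F'(x_*)\|\leq f'(\|x-x_*\|)+1<1$, then apply Lemma~\ref{lem:ban2} with $A=F'(x_*)$, $B=F'(x)$ to obtain injectivity of $F'(x)$ and the bound $\beta/|f'(\|x-x_*\|)|$, with the final claim following from $r\leq\min\{\kappa,\rho\}\leq\min\{\kappa,\nu\}$. Your explicit justification of the strict inequality and of $\rho\leq\nu$ only makes precise what the paper leaves implicit.
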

\begin{proof}
As $\| x-x_*\|<\min\{\nu,\kappa\}$, we have $f'(\| x-x_*\|)<0$. Hence,  using the definition of $\beta$, the inequality \eqref{Hyp:MH} and {\bf h1},  we have
\begin{equation}\label{eq1010}
\|F'(x_*)^{\dagger}\|\|F'(x)-F'(x_*)\|=\beta\|F'(x)-F'(x_*)\|\leq
f'(\| x-x_*\|)-f'(0)< 1.
\end{equation}
Since $F'(x_*)$  is injective, \eqref{eq1010} implies, in view
of Lemma \ref{lem:ban2}, that  $F'(x)$ is injective. So, $F'(x)^*
F'(x) $ is invertible and, by the definition of
 $r$, we obtain that $F'(x)^* F'(x)$ is invertible for all $x\in B(x_*, r)$.
Moreover, from Lemma \ref{lem:ban2} we also have
$$
\left\|F'(x)^{\dagger}\right\|\leq \frac{\beta}{1-\beta\|F'(x)-F'(x_*)\| }\leq \frac{\beta}{1-(f'(\| x-x_*\|)-f'(0))}=\frac{\beta}{|f'(\| x-x_*\|)|} ,
 $$
where $f'(0)=-1$ and $f'<0$ in $[0,\nu)$ are used for obtaining the
last equality.
\end{proof}
Now, it is convenient to study the linearization error of $F$ at
point in~$\Omega$. For this we define
\begin{equation}\label{eq:def.er}
  E_F(x,y):= F(y)-\left[ F(x)+F'(x)(y-x)\right],\qquad y,\, x\in \Omega.
\end{equation}
We will bound this error by the error in the linearization of the
majorant function $f$
\begin{equation}\label{eq:def.erf}
        e_f(t,u):= f(u)-\left[ f(t)+f'(t)(u-t)\right],\qquad t,\,u \in [0,R).
\end{equation}
\begin{lemma}  \label{pr:taylor}
If  $\|x-x_*\|< \kappa$, then  $\beta \|E_F(x, x_*)\|\leq
e_f(\|x-x_*\|, 0).$
\end{lemma}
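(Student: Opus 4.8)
The plan is to combine the Newton--Leibniz representation of $F$ along the segment joining $x_*$ to $x$ with a termwise majorization of the resulting integrand by means of \eqref{Hyp:MH}. Since $\|x-x_*\|<\kappa$, the point $x$ lies in $B(x_*,\kappa)$, which is a convex subset of $\Omega$; hence the whole segment $\{x_*+t(x-x_*):t\in[0,1]\}$ is contained in $\Omega$ and the map $t\mapsto F(x_*+t(x-x_*))$ is continuously differentiable on $[0,1]$. Using this together with $F(x_*)=0$, I would first write
\[
F(x)=\int_0^1 F'\bigl(x_*+t(x-x_*)\bigr)(x-x_*)\,dt.
\]

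Substituting this identity into the definition \eqref{eq:def.er} of the linearization error, and writing $F'(x)=F'\bigl(x_*+1\cdot(x-x_*)\bigr)$ together with $F'(x)(x-x_*)=\int_0^1 F'(x)(x-x_*)\,dt$, I obtain
\[
E_F(x,x_*)=F(x_*)-F(x)-F'(x)(x_*-x)=\int_0^1 \bigl[F'(x)-F'\bigl(x_*+t(x-x_*)\bigr)\bigr](x-x_*)\,dt.
\]
Passing the norm inside the integral and invoking \eqref{Hyp:MH} with $\tau=t$ (which is legitimate precisely because $x\in B(x_*,\kappa)$), this yields
\[
\beta\|E_F(x,x_*)\|\le \int_0^1 \bigl[f'(\|x-x_*\|)-f'(t\|x-x_*\|)\bigr]\,\|x-x_*\|\,dt.
\]

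Finally, with $a:=\|x-x_*\|$ the right-hand side integrates explicitly: since $\int_0^1 f'(ta)\,a\,dt=f(a)-f(0)=f(a)$ by {\bf h1}, the bound becomes $af'(a)-f(a)$, which is precisely $e_f(a,0)=f(0)-[f(a)+f'(a)(0-a)]$ by \eqref{eq:def.erf}; this is the assertion. I do not expect a genuine obstacle here: the only points requiring attention are the justification that $[x_*,x]\subset\Omega$ (so that $F'$ is defined along the segment and the Newton--Leibniz formula applies) and the commutation of the norm with the integral, both of which follow immediately from the continuity hypotheses on $F$ and $F'$ and the convexity of the ball $B(x_*,\kappa)$.
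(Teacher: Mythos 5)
Your proof is correct and follows essentially the same route as the paper: both write $E_F(x,x_*)=\int_0^1\bigl[F'(x)-F'(x_*+\tau(x-x_*))\bigr](x-x_*)\,d\tau$ using the convexity of $B(x_*,\kappa)$, pass the norm inside, apply \eqref{Hyp:MH}, and evaluate the resulting integral to get $\|x-x_*\|f'(\|x-x_*\|)-f(\|x-x_*\|)=e_f(\|x-x_*\|,0)$. The only cosmetic difference is that you invoke $F(x_*)=0$ to set up the integral representation, which is not actually needed for the identity; this is harmless.
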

\begin{proof}
 Since   $B(x_*, \kappa)$ is convex,  we obtain that $x_*+\tau(x-x_*)\in B(x_*, \kappa)$, for $0\leq \tau \leq 1$.
 Thus,  as $F$ is  continuously differentiable in $\Omega$, the definition of $E_F$ and some simple manipulations yield
$$
\beta\|E_F(x,x_*)\|\leq  \int_0 ^1 \beta \left\|
[F'(x)-F'(x_*+\tau(x-x_*))]\right\|\,\left\|x_*-x\right\| \;
d\tau.
$$
From  the last inequality  and  assumption \eqref{Hyp:MH}, we obtain
$$
\beta\|E_F(x,x_*)\| \leq \int_0 ^1
\left[f'\left(\left\|x-x_*\right\|\right)-f'\left(\tau\|x-x_*\|\right)\right]\|x-x_*\|\;d\tau.
$$
Evaluating the above integral and using the definition of $e_f$, the
statement follows.
\end{proof}
Define the Gauss-Newton step for the functions $F$ by the following equality:
\begin{equation} \label{eq:ns}
S_{F}(x):=-F'(x)^{\dagger}F(x).
\end{equation}
\begin{lemma}  \label{passonewton}
If  $\|x-x_*\|< \min\{\nu, \kappa\}$, then $\|S_{F}(x)\|\leq |n_f(\|x-x_*\|)|+\|x-x_{*}\|.$
\end{lemma}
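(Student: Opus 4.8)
The plan is to bound the Gauss-Newton step $S_F(x)=-F'(x)^\dagger F(x)$ by splitting $F(x)$ using the fact that $F(x_*)=0$. Since $F'(x_*)$ is injective and $F(x_*)=0$, one writes $F(x) = F(x)-F(x_*) = E_F(x,x_*) + F'(x_*)(x-x_*)$, where $E_F$ is the linearization error defined in \eqref{eq:def.er}. A cleaner route, which matches the appearance of $n_f$, is to insert the exact Gauss-Newton step relative to $F'(x)$: write $F(x) = F'(x)(x-x_*) + \big(F(x) - F'(x)(x-x_*)\big)$ and note that $F(x)-F'(x)(x-x_*) = -E_F(x,x_*) + \big(F(x)-F(x_*)-F'(x)(x-x_*)\big)$... but the most economical decomposition is simply
\[
S_F(x) = -F'(x)^\dagger F(x) = -F'(x)^\dagger\big[F(x) - F(x_*) - F'(x_*)(x-x_*)\big] - F'(x)^\dagger F'(x_*)(x-x_*),
\]
so that $\|S_F(x)\| \le \|F'(x)^\dagger\|\,\|E_F(x,x_*)\| + \|F'(x)^\dagger F'(x_*)\|\,\|x-x_*\|$, since $F(x)-F(x_*)-F'(x_*)(x-x_*) = -E_F(x,x_*)$ by symmetry of the linearization error argument (applying Lemma~\ref{pr:taylor}-type reasoning, or just the definition \eqref{eq:def.er} with the roles bookkept).

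First I would apply Lemma~\ref{wdns}, valid since $\|x-x_*\|<\min\{\nu,\kappa\}$, to get $\|F'(x)^\dagger\| \le \beta/|f'(\|x-x_*\|)|$. Then I would apply Lemma~\ref{pr:taylor} to bound $\beta\|E_F(x,x_*)\| \le e_f(\|x-x_*\|,0)$. Combining these two gives $\|F'(x)^\dagger\|\,\|E_F(x,x_*)\| \le e_f(\|x-x_*\|,0)/|f'(\|x-x_*\|)|$. Now unfolding the definition \eqref{eq:def.erf}, $e_f(t,0) = f(0) - f(t) - f'(t)(0-t) = -f(t) + tf'(t) = tf'(t) - f(t)$, and since $f'(t)<0$ on $[0,\nu)$, dividing by $|f'(t)| = -f'(t)$ yields $e_f(t,0)/|f'(t)| = (tf'(t)-f(t))/(-f'(t)) = f(t)/f'(t) - t = -(t - f(t)/f'(t)) = -n_f(t) = |n_f(t)|$, where the last equality uses Proposition~\ref{pr:incr1} which gives $n_f(t)<0$ on $(0,\nu)$. (For $t=0$ the bound is trivial.) So $\|F'(x)^\dagger\|\,\|E_F(x,x_*)\| \le |n_f(\|x-x_*\|)|$.

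For the second term I would bound $\|F'(x)^\dagger F'(x_*)\| \le 1$. This is the only slightly delicate piece: one has $F'(x)^\dagger F'(x_*) = (F'(x)^*F'(x))^{-1}F'(x)^*F'(x_*)$, and I expect this to be handled exactly as in the reference \cite{MAX3} — either by a direct operator estimate using \eqref{Hyp:MH} together with $f'(0)=-1$, or by the observation that $\|F'(x)^\dagger F'(x_*)\| = \|I + F'(x)^\dagger(F'(x_*)-F'(x))\| \le 1 + \|F'(x)^\dagger\|\,\|F'(x_*)-F'(x)\| \le 1 + \beta\|F'(x)-F'(x_*)\|/|f'(\|x-x_*\|)|$, and then using \eqref{eq1010} which says $\beta\|F'(x)-F'(x_*)\| \le f'(\|x-x_*\|)+1 = 1-|f'(\|x-x_*\|)|$, giving the bound $1 + (1-|f'(t)|)/|f'(t)| = 1/|f'(t)|$, which is not quite $\le 1$; so in fact the correct grouping must be $\|F'(x)^\dagger F'(x_*)\|\,\|x-x_*\| \le \|x-x_*\|/|f'(\|x-x_*\|)|$ absorbed differently, or — more likely — the intended estimate is the sharper Moore-Penrose identity $\|F'(x)^\dagger F'(x_*)\| \le 1$ coming from the fact that $F'(x)^\dagger F'(x_*)$ is, up to the injective-overdetermined structure, a contraction; I would follow the argument of \cite{MAX3} verbatim here. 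Granting $\|F'(x)^\dagger F'(x_*)\| \le 1$, adding the two bounds gives $\|S_F(x)\| \le |n_f(\|x-x_*\|)| + \|x-x_*\|$, completing the proof.

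The main obstacle, then, is establishing the clean bound $\|F'(x)^\dagger F'(x_*)\|\le 1$ (equivalently, controlling the "good" part $F'(x)^\dagger F'(x)(x-x_*)$ versus the perturbation) — every other ingredient is a direct invocation of Lemma~\ref{wdns}, Lemma~\ref{pr:taylor}, and the algebraic identity reducing $e_f(t,0)/|f'(t)|$ to $|n_f(t)|$. The triangle inequality and the hypothesis $\|x-x_*\|<\min\{\nu,\kappa\}$ (which guarantees both lemmas apply and $f'(\|x-x_*\|)<0$) do the rest.
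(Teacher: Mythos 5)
There is a genuine gap, and you have correctly located it yourself: the term $\|F'(x)^\dagger F'(x_*)\|$ that your decomposition produces cannot in general be bounded by $1$. Your own computation shows the best available estimate is $\|F'(x)^\dagger F'(x_*)\|\le 1/|f'(\|x-x_*\|)|\ge 1$, and there is no ``sharper Moore--Penrose identity'' to rescue this: $F'(x)^\dagger F'(x_*)=I+F'(x)^\dagger\bigl(F'(x_*)-F'(x)\bigr)$ is generally not a contraction. Moreover, your claimed identity $F(x)-F(x_*)-F'(x_*)(x-x_*)=-E_F(x,x_*)$ is false: by \eqref{eq:def.er}, $E_F(x,x_*)$ is the linearization error with derivative taken at $x$, i.e.\ $-E_F(x,x_*)=F(x)-F(x_*)-F'(x)(x-x_*)$, not with $F'(x_*)$; and Lemma~\ref{pr:taylor} only bounds $E_F(x,x_*)$ in that form, not the error of the linearization based at $x_*$.

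The missing idea is to linearize at $x$ rather than at $x_*$. Since $F(x_*)=0$,
\begin{equation*}
S_F(x)=-F'(x)^\dagger F(x)=F'(x)^\dagger\bigl(F(x_*)-[F(x)+F'(x)(x_*-x)]\bigr)+F'(x)^\dagger F'(x)(x_*-x)
=F'(x)^\dagger E_F(x,x_*)+(x_*-x),
\end{equation*}
where the last step uses that $F'(x)^\dagger=(F'(x)^*F'(x))^{-1}F'(x)^*$ is a \emph{left} inverse, so $F'(x)^\dagger F'(x)=I$ exactly. This makes the linear part contribute exactly $\|x-x_*\|$ with no operator norm to control, and the error part is exactly the $E_F(x,x_*)$ of Lemma~\ref{pr:taylor}. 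From there your remaining steps are correct and coincide with the paper's: Lemma~\ref{wdns} gives $\|F'(x)^\dagger\|\le\beta/|f'(\|x-x_*\|)|$, Lemma~\ref{pr:taylor} gives $\beta\|E_F(x,x_*)\|\le e_f(\|x-x_*\|,0)$, and the algebraic reduction $e_f(t,0)/|f'(t)|=f(t)/f'(t)-t=|n_f(t)|$ (using $f'<0$ on $[0,\nu)$, $f(0)=0$, and Proposition~\ref{pr:incr1}) finishes the proof.
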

\begin{proof}
Using \eqref{eq:ns}, $F(x_*)=0$ and some algebraic manipulations, it follows from \eqref{eq:def.er} that
\begin{align*}
\|S_{F}(x)\|&=\|F'(x)^{\dagger}\left(F(x_{*})-[F(x)+F'(x)(x_{*}-x)]\right)+(x_{*}-x)\|\\
&\leq \|F'(x)^{\dagger}\|\|E_F(x,x_*)\|+\|x-x_{*}\|.
\end{align*}
So, the last inequality, together with the Lemmas~\ref{wdns} and \ref{pr:taylor}, gives
$$\|S_{F}(x)\|\leq
{e_f(\|x-x_*\|, 0)}/{|f'(\|x-x_*\|)|}+\|x-x_{*}\|.$$
Since $f'<0$ in $[0,\nu)$ and $\|x-x_*\|< \nu$, we obtain from the last inequality,  \eqref{eq:def.erf}  and {\bf h1}, that
$$\|S_{F}(x)\|\leq
{f(\|x-x_*\|)}/{f'(\|x-x_*\|)},$$
which, combined  with   \eqref{eq:def.nf} and Proposition~\ref{pr:incr1},  implies the desired inequality.
 \end{proof}
\begin{lemma} \label{l:wdef}
Let $\banacha$, $\banachb$, $\Omega$, $F$,  $x_*,$ $R,$ $\beta$ and  $\kappa$  as defined in Theorem~\ref{th:nt}.
Suppose that $F(x_*)=0$, $F '(x_*)$ is injective and there exists a continuously differentiable function $f:[0,\; R)\to \mathbb{R}$
 satisfying  \eqref{Hyp:MH}, {\bf h1} and {\bf h2}.  Let $\vartheta$,  $\omega_{1}$, $\omega_{2}$,  $\nu$, $\rho$ and $r$ as defined in Theorem~\ref{th:nt}. Assume that $x\in B(x_*, r)\backslash \{x_*\}$, i.e., $0<\|x-x_*\|< r$.   Define
\begin{equation} \label{eq:DNSqnG}
x_{+}={x}+S, \qquad  B(x)S=-F'(x)^*F(x)+r,
\end{equation}
where $B(x)$ is an invertible approximation of $F'(x)^*F'(x)$ satisfying the following conditions
\begin{equation}\label{con:qnG}
\|B(x)^{-1}F'(x)^*F'(x)\| \leq \omega_{1}, \qquad
\|B(x)^{-1}F'(x)^*F'(x)-I\| \leq \omega_{2},
\end{equation}
and the residual, $r$, the forcing term, $\theta$,  and the
preconditioning invertible matrix, $P$,  are such that
\begin{equation}\label{eq:ERROqnG}
\theta \mbox{cond}(P F'(x)^*F'(x))\leq\vartheta, \qquad  \|P r\|\leq \theta\|P F'(x)^*F(x)\|.
\end{equation}
Then $x_{+}$ is well defined and it holds
  \begin{equation}\label{tt1}
    \|x_{+}-x_*\| \leq \omega_{1}(1+\vartheta)|n_f(\|x-x_*\|)|+(\omega_{1}\vartheta+\omega_{2})\|x-x_*\|.
\end{equation}
As a consequence,
$$
\|x_{+}-x_{*}\|< \|x-x_*\|.
$$
\end{lemma}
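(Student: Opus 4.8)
The plan is to write the error $x_+-x_*$ as a sum of three contributions --- a Gauss--Newton-type term, a term measuring the inexactness of $B(x)$, and a residual term --- and to majorize each of them by the scalar quantities $|n_f(\|x-x_*\|)|$ and $\|x-x_*\|$ appearing in the recursion \eqref{eq:tknk}; this gives \eqref{tt1} directly, and the final strict decrease $\|x_+-x_*\|<\|x-x_*\|$ then follows from the scalar inequality \eqref{eq:001} already established in Proposition~\ref{pr:incr3}.

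First I would dispose of well-definedness: $B(x)$ is invertible by hypothesis, so $S=B(x)^{-1}\big(-F'(x)^*F(x)+r\big)$, and hence $x_+$, is well defined; and since $0<\|x-x_*\|<r\le\min\{\nu,\kappa\}$ (recall $\rho\le\nu$), Lemma~\ref{wdns} makes $F'(x)^{\dagger}$, the step $S_F(x)$ of \eqref{eq:ns}, and the bound $\|F'(x)^{\dagger}\|\le\beta/|f'(\|x-x_*\|)|$ available. Next, setting $H:=B(x)^{-1}F'(x)^*F'(x)$, so that $\|H\|\le\omega_1$ and $\|H-I\|\le\omega_2$ by \eqref{con:qnG} and $B(x)^{-1}=H\,(F'(x)^*F'(x))^{-1}$, I would use $F(x_*)=0$ to rewrite \eqref{eq:DNSqnG} as
\[
x_+-x_*=(I-H)(x-x_*)+H\,F'(x)^{\dagger}E_F(x,x_*)+B(x)^{-1}r ,
\]
where $B(x)^{-1}F'(x)^*F(x)=-H\,S_F(x)$ has been used together with the identity $S_F(x)=F'(x)^{\dagger}E_F(x,x_*)+(x_*-x)$ --- valid because $F'(x)^{\dagger}F'(x)=I$ and $F(x_*)-F(x)=E_F(x,x_*)+F'(x)(x_*-x)$ by \eqref{eq:def.er} --- exactly as in the proof of Lemma~\ref{passonewton}.

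The first two terms are then bounded at once: taking norms and combining Lemmas~\ref{wdns} and~\ref{pr:taylor} with the elementary computation $e_f(t,0)/|f'(t)|=|n_f(t)|$ (which follows from \eqref{eq:def.erf}, \eqref{eq:def.nf} and {\bf h1}, as in the proof of Lemma~\ref{passonewton}), one finds that their sum is at most $\omega_1\,|n_f(\|x-x_*\|)|+\omega_2\|x-x_*\|$. For the residual term $B(x)^{-1}r$ I expect the real work: from $PF'(x)^*F(x)=-\big(PF'(x)^*F'(x)\big)S_F(x)$ and $\|Pr\|\le\theta\|PF'(x)^*F(x)\|$ one gets $\|Pr\|\le\theta\,\|PF'(x)^*F'(x)\|\,\|S_F(x)\|$, while the factorization $B(x)^{-1}P^{-1}=\big(B(x)^{-1}F'(x)^*F'(x)\big)\big(PF'(x)^*F'(x)\big)^{-1}$ gives $\|B(x)^{-1}P^{-1}\|\le\omega_1\,\|(PF'(x)^*F'(x))^{-1}\|$; multiplying these and invoking the condition-number hypothesis in \eqref{eq:ERROqnG} yields
\[
\|B(x)^{-1}r\|\le\|B(x)^{-1}P^{-1}\|\,\|Pr\|\le\omega_1\,\theta\,\mbox{cond}\big(PF'(x)^*F'(x)\big)\,\|S_F(x)\|\le\omega_1\vartheta\,\|S_F(x)\| ,
\]
and Lemma~\ref{passonewton} bounds $\|S_F(x)\|\le|n_f(\|x-x_*\|)|+\|x-x_*\|$. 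Adding the three estimates produces precisely \eqref{tt1}; and since $\|x-x_*\|\in(0,r)\subset(0,\rho)$, the last assertion follows from \eqref{eq:001} applied with $t=\|x-x_*\|$.

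The step I expect to be the genuine obstacle is the estimate for $B(x)^{-1}r$: one has to route the preconditioner $P$ through the factorization $B(x)^{-1}P^{-1}=H\,(PF'(x)^*F'(x))^{-1}$ and recognize that the hypothesis $\theta\,\mbox{cond}(PF'(x)^*F'(x))\le\vartheta$ is exactly what makes $P$ disappear, after first observing $\|PF'(x)^*F(x)\|\le\|PF'(x)^*F'(x)\|\,\|S_F(x)\|$. Everything else is the routine linearization-error bookkeeping already encapsulated in Lemmas~\ref{pr:taylor}, \ref{passonewton} and~\ref{wdns}.
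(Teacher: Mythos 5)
Your proposal is correct and follows essentially the same route as the paper: the same three-term decomposition of $x_+-x_*$ (linearization error via $F'(x)^*F'(x)F'(x)^{\dagger}=F'(x)^*$, the $\omega_2$-term from $\|B(x)^{-1}F'(x)^*F'(x)-I\|$, and the residual term routed through $P$ exactly as you describe), with the same invocations of Lemmas~\ref{wdns}, \ref{pr:taylor} and \ref{passonewton} and the identity $e_f(t,0)/|f'(t)|=|n_f(t)|$. The final strict decrease via \eqref{eq:001} also matches the paper's argument.
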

\begin{proof}
First note that, as $\|x-x_*\|<r$, it follows from Lemma \ref{wdns} that $F'(x)^*F'(x)$ is invertible. Now, let $B(x)$ be an invertible approximation of it satisfying \eqref{con:qnG}. Thus, $x_{+}$ is well defined. Now, as $F(x_*)=0,$ some simple algebraic manipulations and \eqref{eq:DNSqnG} yield
$$
x_{+}-x_{*}=  x -x_* - B(x)^{-1}F'(x)^*\big(F(x)-F(x_*)\big)+B(x)^{-1}{r}.
$$
Using $F'(x)^*F'(x) F'(x)^\dagger=F'(x)^*$ and some algebraic manipulations, the above equation gives
\begin{multline*}
x_{+}-x_{*}=B(x)^{-1}F'(x)^*F'(x) F'(x)^\dagger\big(F(x_*)-[F(x)+F'(x)(x_{*}-x)]\big)+B(x)^{-1}{r}\\
+B(x)^{-1}\left(F'(x)^*F'(x)-B(x)\right)(x-x_*) .
\end{multline*}
The last equation, together with  \eqref{eq:def.er} and \eqref{con:qnG}, implies that
$$
\|x_{+}-x_{*}\|\leq \omega_1 \|F'(x)^\dagger\| \|E_{F}(x,x_{*})\|+\|B(x)^{-1}{r}\|
+\omega_2\|x-x_*\|.
$$
On the other hand, using \eqref{eq:ns}, \eqref{con:qnG} and \eqref{eq:ERROqnG} we obtain, by simple manipulations, that
\begin{align*}
\|B(x)^{-1}{r}\|&\leq\|B(x)^{-1}P^{-1}\|\|P{r}\|\\
&\leq\theta\|B(x)^{-1}F'(x)^*F'(x)\|\|(PF'(x)^*F'(x))^{-1}\|\|PF'(x)^*F'(x)\|\|F'(x)^\dagger F(x)\|\\
&\leq\omega_1\vartheta\|S_F(x)\|.
\end{align*}
Hence, it follows from the two last equations that
$$
\|x_{+}-x_{*}\|\leq \omega_1 \|F'(x)^\dagger\| \|E_{F}(x,x_{*})\|
+\omega_1\vartheta\|S_F(x)\|+\omega_2\|x-x_*\|.
$$
Combining the last equation with the Lemmas~\ref{wdns}, \ref{pr:taylor} and \ref{passonewton}, we obtain that
$$
\|x_{+}-x_{*}\|\leq \omega_1{e_f(\|x-x_*\|,
0)}/{|f'(\|x-x_*\|)|}+\omega_1\vartheta|n_f(\|x-x_*\|)|+(\omega_1\vartheta+\omega_2)\|x-x_*\|.
$$
Now, taking into account that $f(0)=0$, the definitions of $e_f$ and $n_f$  imply that
$${e_f(\|x-x_*\|,
0)}/{|f'(\|x-x_*\|)|}=|n_f(\|x-x_*\|)|.$$
So, the inequality in \eqref{tt1} follows by combining  the above two inequalities.

Take $x\in B(x_*, r)$. Since $0<\|x-x_*\|<r\leq \rho$, the inequalities in \eqref{eq:001} and  \eqref{tt1}    imply that
$$
\|x_+-x_*\|\leq  \omega_{1}(1+\vartheta)|n_f(\|x-x_*\|)|+(\omega_{1}\vartheta+\omega_{2})\|x-x_*\|<\|x-x_*\|,
 $$
 which proves the last statement of the lemma.
\end{proof}
\subsection{Inexact Gauss-Newton like sequence} \label{sec:proof}
In this section,  we will prove the statements in Theorem~\ref{th:nt} involving  the inexact Gauss-Newton like sequence $\{x_k\}$.
\begin{proposition}\label{pr:nthe}
The sequence $\{x_k\}$ is well defined, contained in $B(x_*,r)$ and
it converges to  $x_*$ with linear rate, i.e.,
\begin{equation} \label{eq:q2e}
    \limsup_{k \to \infty}\left[\|x_{k+1}-x_*\|\big{/}\|x_k-x_*\|\right]\leq\omega_1\vartheta+\omega_2.
  \end{equation}
If {\bf  h3} holds, the sequences  $\{x_k\}$  and $\{t_k\}$ satisfy
\begin{equation}\label{eq:mjs}
\|x_{k+1}-x_*\| \leq
(1+\vartheta)\omega_{1}\left(\frac{f(t_0)-t_0f'(t_0)}{{t_0}^{p+1}f'(t_0)}\right)\|x_k-x_*\|^{p+1}+\left(\omega_{1}\vartheta+\omega_{2}\right){\|x_k-x_*\|},
\end{equation}
for all $k=0,1,\ldots,$ and
\begin{equation}\label{eq:tk}
\|x_{k}-x_*\|\leq t_k, \qquad k=0,1, \ldots.
  \end{equation}
\end{proposition}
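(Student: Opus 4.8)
The plan is to iterate Lemma~\ref{l:wdef} along the sequence $\{x_k\}$. First I would prove by induction on $k$ that $x_k$ is well defined and $0<\|x_k-x_*\|<r$: the base case is the hypothesis on $x_0$, and for the step, taking $x=x_k$ the quantities $B(x_k)$, $r_k$, $\theta_k$, $P_k$ satisfy precisely the assumptions \eqref{con:qnG}--\eqref{eq:ERROqnG} of Lemma~\ref{l:wdef}, so $x_{k+1}$ is well defined, \eqref{tt1} holds with $x=x_k$ and $x_+=x_{k+1}$, and $\|x_{k+1}-x_*\|<\|x_k-x_*\|<r$. It remains to rule out $x_{k+1}=x_*$: since $\|x_k-x_*\|\in(0,\nu)$, Proposition~\ref{pr:incr1} gives $|n_f(\|x_k-x_*\|)|>0$, and as $\omega_1(1+\vartheta)>0$ the right-hand side of \eqref{tt1} is strictly positive, so $x_{k+1}\in B(x_*,r)\setminus\{x_*\}$ and the induction closes. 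In particular $\{\|x_k-x_*\|\}$ is strictly decreasing and bounded below, hence converges to some $\ell\in[0,\rho)$.

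Next I would show $x_k\to x_*$ and establish \eqref{eq:q2e}. Letting $k\to\infty$ in \eqref{tt1} with $x=x_k$ and using that $n_f$ is continuous on $[0,\nu)$ (recall $f'<0$ there) gives $\ell\le\omega_1(1+\vartheta)|n_f(\ell)|+(\omega_1\vartheta+\omega_2)\ell$; if $\ell>0$ this contradicts \eqref{eq:001}, so $\ell=0$, i.e. $x_k\to x_*$. For the linear rate, divide \eqref{tt1} by $\|x_k-x_*\|>0$ to obtain $\|x_{k+1}-x_*\|/\|x_k-x_*\|\le\omega_1(1+\vartheta)\,|n_f(\|x_k-x_*\|)|/\|x_k-x_*\|+\omega_1\vartheta+\omega_2$, and take $\limsup$ as $k\to\infty$; since $\|x_k-x_*\|\to0$ and $|n_f(t)|/t\to0$ as $t\to0$ by the first part of Proposition~\ref{pr:incr3}, inequality \eqref{eq:q2e} follows.

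Finally, assume {\bf h3}. By Proposition~\ref{pr:incr33} the function $t\mapsto|n_f(t)|/t^{p+1}$ is strictly increasing and positive on $(0,\nu)$; multiplying it by the positive, nondecreasing factor $t^p$ (here $0\le p\le1$) shows $t\mapsto|n_f(t)|/t$ is strictly increasing, hence so are $t\mapsto|n_f(t)|$ and $\varphi(t):=\omega_1(1+\vartheta)|n_f(t)|+(\omega_1\vartheta+\omega_2)t$. For \eqref{eq:mjs}: since $0<\|x_k-x_*\|\le\|x_0-x_*\|=t_0<\nu$ (the sequence is decreasing and $t_0<r\le\nu$), monotonicity of $|n_f(t)|/t^{p+1}$ gives $|n_f(\|x_k-x_*\|)|\le\big(|n_f(t_0)|/t_0^{p+1}\big)\|x_k-x_*\|^{p+1}$; inserting this in \eqref{tt1} and writing $|n_f(t_0)|=f(t_0)/f'(t_0)-t_0=\big(f(t_0)-t_0f'(t_0)\big)/f'(t_0)$ (legitimate since $n_f(t_0)<0$ by Proposition~\ref{pr:incr1}) yields \eqref{eq:mjs}. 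For \eqref{eq:tk}, argue by induction: the case $k=0$ is $\|x_0-x_*\|=t_0$, and if $\|x_k-x_*\|\le t_k$ then by \eqref{tt1}, the identity $t_{k+1}=\varphi(t_k)$ (see \eqref{eq:tknk}) and monotonicity of $\varphi$ one gets $\|x_{k+1}-x_*\|\le\varphi(\|x_k-x_*\|)\le\varphi(t_k)=t_{k+1}$.

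The steps above are largely bookkeeping built on Lemma~\ref{l:wdef}; the two points requiring care are verifying that $\{x_k\}$ never leaves $B(x_*,r)\setminus\{x_*\}$ (so that Lemma~\ref{l:wdef} can be re-applied at each $k$), which relies on $|n_f|>0$ on $(0,\nu)$, and extracting from {\bf h3} the monotonicity of $|n_f|$ and of $\varphi$ — this monotonicity is exactly what permits the comparison of $\{x_k\}$ with the scalar majorizing sequence $\{t_k\}$ and hence the estimates \eqref{eq:mjs} and \eqref{eq:tk}.
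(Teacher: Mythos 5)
Your overall architecture is the paper's own: iterate Lemma~\ref{l:wdef} inductively to obtain well-definedness and the recursive bound \eqref{tt1}, deduce that $\{\|x_k-x_*\|\}$ is strictly decreasing and must converge to $0$ via \eqref{eq:001} and the continuity of $n_f$, get \eqref{eq:q2e} from $|n_f(t)|/t\to 0$, and derive \eqref{eq:mjs} and \eqref{eq:tk} from the monotonicity supplied by {\bf h3}. Your route to \eqref{eq:tk} (extracting monotonicity of $t\mapsto|n_f(t)|$ and of $\varphi$ from Proposition~\ref{pr:incr33}) is a harmless repackaging of the paper's argument, which instead compares the ratios $|n_f(t)|/t^{p+1}$ at $\|x_k-x_*\|$ and at $t_k$ directly; both rest on exactly the same monotonicity.

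One step is logically invalid as written: to preserve the induction hypothesis $0<\|x_{k+1}-x_*\|$ you argue that the right-hand side of \eqref{tt1} is strictly positive, ``so'' $x_{k+1}\neq x_*$. That inference is backwards --- \eqref{tt1} is an \emph{upper} bound, and a positive upper bound says nothing about whether $\|x_{k+1}-x_*\|$ itself vanishes; an iterate can in principle land exactly on $x_*$. The repair is cheap. The hypothesis $x\neq x_*$ in Lemma~\ref{l:wdef} is used only for the final strict inequality $\|x_+-x_*\|<\|x-x_*\|$ (which invokes \eqref{eq:001} on $(0,\rho)$); well-definedness and \eqref{tt1} hold for every $x\in B(x_*,r)$, and for $x=x_*$ both sides of \eqref{tt1} are zero (since $F(x_*)=0$ forces the residual, hence the step, to vanish, while $n_f(0)=0$). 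So either weaken the induction hypothesis to $\|x_k-x_*\|<r$, or dispose of the degenerate case separately: if $x_{k_0}=x_*$ for some $k_0$, then $x_k=x_*$ for all $k\ge k_0$ and every assertion of the proposition is trivial from that index on. The same caveat silently underlies your later divisions by $\|x_k-x_*\|$. Everything else checks out.
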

\begin{proof}
Since  $x_0\in B(x_*,r)/\{x_*\},$ i.e., $0<\|x_0-x_*\|<r,$ a  combination of
Lemma~\ref{wdns}, the last inequality in Lemma~\ref{l:wdef} and an induction argument, we conclude that  $\{x_k\}$ is well defined and it remains in $B(x_*,r)$.

We will now prove that  $\{x_k\}$ converges to  $x_*$.
Since $\|x_{k}-x_*\|<r\leq \rho$,  for $ k=0,1,\ldots \,$,  we obtain from
Lemma~\ref{l:wdef} with $x_+=x_{k+1},$ $x=x_{k},$ $r=r_{k},$ $B(x)=B(x_{k}),$ $P=P_k$ and $\theta=\theta_{k}$, and Proposition~\ref{pr:incr3}, that
\begin{equation}\label{eq:conv1}
0\leq\|x_{k+1}-x_*\|\leq \omega_{1}(1+\vartheta)|n_f(\|x_k-x_*\|)|+(\omega_{1}\vartheta+\omega_{2})\|x_k-x_*\|<\|x_{k}-x_*\|,\quad  k=0,1,\ldots \,.
\end{equation}

So, $\{\|x_{k}-x_*\| \}$ is a bounded and  strictly decreasing
sequence. Therefore $\{\|x_{k}-x_*\| \}$ converges. Let
$\ell_*=\lim_{k\to \infty}\|x_{k}-x_*\|$.
 Since  $\{\|x_{k}-x_*\| \}$ remains in $(0, \,\rho)$ and is strictly decreasing, we have $0\leq \ell_*<\rho$. Thus, taking
the limit in \eqref{eq:conv1} with $t$ converging to $0$ and using  the  continuity of $n_f$ in $[0, \rho)$,
we obtain that  $0\leq \ell_{*}=\omega_{1}(1+\vartheta)|n_f(\ell_*)|+(\omega_{1}\vartheta+\omega_{2})\ell_*$. But, if $\ell_* \neq 0$, Proposition~\ref{pr:incr3}  implies $\omega_{1}(1+\vartheta)|n_f(\ell_*)|+(\omega_{1}\vartheta+\omega_{2})\ell_*<\ell_*$,
hence $\ell_*=0$. Therefore,  the convergence $x_k \rightarrow x_*$ is proved.

In order to establish  inequality  \eqref{eq:q2e}, note that inequality \eqref{eq:conv1} implies
$$
\left[\|x_{k+1}-x_*\|\big{/}\|x_{k}-x_*\|\right]\leq \omega_{1}(1+\vartheta)\left[|n_f(\|x_{k}-x_*\|)|\big{/}\|x_{k}-x_*\|\right]+\omega_{1}\vartheta+\omega_{2}, \qquad k=0,1, \ldots.
$$
Hence, as  $\{\|x_{k+1}-x_*\|\big{/}\|x_{k}-x_*\|\}$
is bounded  and  $\lim_{k\to \infty}\|x_{k}-x_*\|=0$, the  desired inequality follows from the first statement in  Proposition~\ref{pr:incr3}.

Now,  inequality  \eqref{eq:conv1} also implies
\begin{equation}\label{c1}
\|x_{k+1}-x_*\|\leq \omega_{1}(1+\vartheta)\frac{|n_f(\|x_k-x_*\|)|}{\|x_k-x_*\|^{p+1}}\|x_k-x_*\|^{p+1}+(\omega_{1}\vartheta+\omega_{2})\|x_k-x_*\|,\quad  k=0,1,\ldots \,.
\end{equation}
Thus, using that $\{\|x_{k}-x_*\| \}$ is strictly decreasing,
Proposition~\ref{pr:incr33} and the definition of $n_f$ in
\eqref{eq:def.nf}, the last inequality gives  \eqref{eq:mjs}.

To end the proof, we will show  inequality \eqref{eq:tk} by induction. As $t_0=\|x_0-x_*\|$, it  is  immediate
for $k=0$. Assume that $\|x_{k}-x_*\|\leq t_k$.  Hence, \eqref{c1}, Proposition~\ref{pr:incr33} and the  definition of $t_{k+1}$ in \eqref{eq:tknk} imply
$$
\|x_{k+1}-x_*\|\leq \omega_{1}(1+\vartheta){|n_f(t_k)|}+(\omega_{1}\vartheta+\omega_{2})t_k=t_{k+1},
$$
Therefore,  inequality \eqref{eq:tk} holds.
\end{proof}
Proof of Theorem~\ref{th:nt} follows from Corollary~\ref{cr:kanttk} and Proposition~\ref{pr:nthe}.

\section{Special Cases} \label{apl}
In this section, we present some special cases of Theorem~\ref{th:nt}.
\subsection{Convergence results  under H\"{o}lder-like and Smale  conditions}
In this section, we  present  a local convergence theorem for the inexact Gauss-Newton like methods under a H\"{o}lder-like condition, see \cite{F10,MAX5,huangy2004}.  We also provide a Smale's theorem on the inexact Gauss-Newton like methods for analytical functions, cf. \cite{S86}.
\begin{theorem}\label{th:HV}
Let $\banacha$ and $\banachb$ be  Hilbert spaces,
 $\Omega\subseteq \banacha$ be an open set and
$F:{\Omega}\to \banachb$ be a continuously differentiable
function such that $F'$  has a closed image in $\Omega$. Let $x_* \in \Omega,$ $R>0$,
$
\beta:=\|F'(x_*)^{\dagger}\| $ and $ \kappa:=\sup \left\{ t\in [0, R): B(x_*, t)\subset\Omega \right\}.
$
Suppose that $F(x_*)=0$,
$F '(x_*)$ is injective
and there exists a constant $K>0$ and $ 0< p \leq 1$ such that
$$
\beta\left\|F'(x)-F'(x_*+\tau(x-x_*))\right\|\leq  K(1-\tau^p) \|x-x_*\|^p, \qquad   x\in B(x_*, \kappa) \quad \tau \in [0,1].
$$
Take $0\leq \vartheta<1$, $0\leq \omega_{2}<\omega_{1}$ such that  $\omega_{1}\vartheta+\omega_{2}<1 $.
Let $$r=\min \left\{\kappa, \left[\frac{(1-\omega_{1}\vartheta-\omega_{2})(p+1)}{K(1-\omega_{1}\vartheta-\omega_{2}+p(1+\omega_{1}-\omega_{2}))}\right]^{1/p}\right\}.$$
Consider the inexact Gauss-Newton like methods, with initial point $x_0\in
B(x_*, r)\backslash \{x_*\}$, defined by
$$
x_{k+1}={x_k}+S_k, \qquad  B(x_k)S_k=-F'(x_k)^*F(x_k)+r_{k}, \qquad
\; k=0,1,\ldots,
$$
where $B(x_k)$ is an invertible approximation of $F'(x_k)^*F'(x_k)$ satisfying the following conditions
$$
\|B(x_k)^{-1}F'(x_k)^*F'(x_k)\| \leq \omega_{1}, \qquad
\|B(x_k)^{-1}F'(x_k)^*F'(x_k)-I\| \leq \omega_{2}, \qquad \;
k=0,1,\ldots,
$$
and the residual tolerance, $r_k$, the forcing term, $\theta_k$, and
the preconditioning invertible matrix, $P_{k}$,  are such that
$$
\|P_{k}r_{k}\|\leq \theta_{k}\|P_{k}F'(x_{k})^*F(x_{k})\|, \qquad
0\leq \theta_{k}\mbox{cond}(P_{k}F'(x_{k})^*F'(x_{k}))\leq
\vartheta,\qquad \; k=0,1,\ldots.$$
 Define the scalar sequence
$\{t_k\}$, with initial point  $t_0=\|x_0-x_*\|$, by
$$
   t_{k+1} =\frac{(1+\vartheta)\omega_{1} \,p \,K t_{k}^{p+1}}{(p+1)[1-K\,t_k^{p}]}+(\omega_{1}\vartheta+\omega_{2})t_k, \qquad
   k=0,1,\ldots\,.
$$
Then, the sequences $\{x_k\}$  and $\{t_k\}$ are well defined; $\{t_k\}$  is strictly decreasing, contained in $(0, r)$ and it converges to $0$. Furthermore, $\{x_k\}$ is contained in $B(x_*,r)$, it converges to  $x_*$,
$$
\|x_{k+1}-x_*\| \leq
\frac{(1+\vartheta)\omega_{1} p\,K}{(p+1)[1- K\,\|x_0-x_*\|^{p}]}\|x_k-x_*\|^{p+1}+\left(\omega_{1}\vartheta+\omega_{2}\right){\|x_k-x_*\|},
\qquad k=0,1,\ldots,
$$
and
$$
\|x_{k}-x_*\|\leq t_k, \qquad k=0,1, \ldots.
$$
\end{theorem}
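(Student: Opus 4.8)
The plan is to derive Theorem~\ref{th:HV} directly from Theorem~\ref{th:nt} by exhibiting an explicit majorant function that turns the H\"older-like hypothesis into the majorant condition \eqref{Hyp:MH} together with \textbf{h1}, \textbf{h2} and \textbf{h3}. The natural choice is
$$
f(t) := \frac{K}{p+1}\,t^{p+1} - t, \qquad t \in [0, R),\quad R := \max\{\kappa,\,K^{-1/p}\},
$$
so that $f'(t) = K t^{p} - 1$. This $f$ is continuously differentiable on $[0,R)$ because $p>0$; moreover $f(0)=0$ and $f'(0)=-1$, which is \textbf{h1}, and $f'$ is strictly increasing since $K,p>0$, which is \textbf{h2}. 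The majorant inequality is then immediate, since
$$
f'\!\left(\|x-x_*\|\right) - f'\!\left(\tau\|x-x_*\|\right) = K\|x-x_*\|^{p} - K\tau^{p}\|x-x_*\|^{p} = K(1-\tau^{p})\|x-x_*\|^{p}
$$
is exactly the assumed upper bound on $\beta\|F'(x)-F'(x_*+\tau(x-x_*))\|$.

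It remains to check \textbf{h3} for this same $p$ and to rewrite in closed form the quantities appearing in Theorem~\ref{th:nt}. With the above $f$ one has $\nu = \sup\{t\in[0,R): f'(t)<0\} = K^{-1/p}$, and a short computation gives $f(t) - tf'(t) = -\frac{pK}{p+1}\,t^{p+1}$, whence
$$
n_f(t) = t - \frac{f(t)}{f'(t)} = -\,\frac{pK\,t^{p+1}}{(p+1)\bigl(1-Kt^{p}\bigr)}, \qquad \frac{f(t)/f'(t)-t}{t^{p+1}} = \frac{pK}{(p+1)\bigl(1-Kt^{p}\bigr)}, \qquad t\in(0,\nu).
$$
On $(0,\nu)$ the quantity $1-Kt^{p}$ is positive and strictly decreasing, so the last expression is strictly increasing, which is \textbf{h3}. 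Substituting $|n_f(t_k)|$ into the recurrence $t_{k+1} = (1+\vartheta)\omega_1|n_f(t_k)| + (\omega_1\vartheta+\omega_2)t_k$ reproduces verbatim the scalar sequence stated in Theorem~\ref{th:HV}; the constant $\bigl(f(t_0)-t_0f'(t_0)\bigr)\big/\bigl(t_0^{p+1}f'(t_0)\bigr)$ in \eqref{cd12} collapses to $pK\big/\bigl((p+1)(1-K\|x_0-x_*\|^{p})\bigr)$, giving the displayed rate inequality; and \eqref{cd1} is just $\|x_k-x_*\|\le t_k$.

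The only genuinely computational point — and where the write-up needs some care — is to check that the radius $\rho$ of Theorem~\ref{th:nt} equals the closed form in Theorem~\ref{th:HV}. By \eqref{eq:rho} and the formula for $n_f$, the inequality defining $\rho$ reads
$$
(1+\vartheta)\omega_1\,\frac{pKt^{p}}{(p+1)\bigl(1-Kt^{p}\bigr)} + \omega_1\vartheta + \omega_2 < 1, \qquad t\in(0,\delta);
$$
since $1-\omega_1\vartheta-\omega_2>0$ and $1-Kt^{p}>0$ on $(0,\nu)$, one may clear denominators and solve for $t^{p}$, and collecting terms yields the threshold
$$
t^{p} < \frac{(1-\omega_1\vartheta-\omega_2)(p+1)}{K\bigl(1-\omega_1\vartheta-\omega_2+p(1+\omega_1-\omega_2)\bigr)}.
$$
Because $\vartheta\ge 0$, the right-hand side is strictly smaller than $K^{-1}=\nu^{p}$, so the truncation at $\nu$ is inactive and $\rho$ equals the $p$-th root of the displayed threshold; hence $r=\min\{\kappa,\rho\}$ is exactly the radius in the statement. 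All hypotheses of Theorem~\ref{th:nt} now hold for this $f$, so its conclusions deliver those of Theorem~\ref{th:HV}.
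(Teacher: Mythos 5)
Your proposal is correct and follows essentially the same route as the paper: instantiate Theorem~\ref{th:nt} with the majorant function $f(t)=Kt^{p+1}/(p+1)-t$, verify \textbf{h1}--\textbf{h3} and \eqref{Hyp:MH}, and compute $\nu$, $n_f$, and $\rho$ in closed form (the paper merely asserts these steps as ``immediate''/``easily seen'', whereas you carry them out, including the correct algebra showing $\rho^p=\frac{(1-\omega_1\vartheta-\omega_2)(p+1)}{K\left(1-\omega_1\vartheta-\omega_2+p(1+\omega_1-\omega_2)\right)}<K^{-1}=\nu^p$). No gaps.
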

\begin{proof}
It is immediate to prove that  $F$, $x_*$ and $f:[0, \kappa)\to \mathbb{R}$, defined by
$
f(t)=Kt^{p+1}/(p+1)-t,
$
satisfy the inequality \eqref{Hyp:MH} and the conditions  {\bf h1}, {\bf h2} and  {\bf h3} in Theorem \ref{th:nt}.
In this case, it is easily seen that  $\rho$ and $\nu$, as defined in Theorem \ref{th:nt}, satisfy
$$
\rho=\left[\frac{(1-\omega_{1}\vartheta-\omega_{2})(p+1)}{K(1-\omega_{1}\vartheta-\omega_{2}+p(1+\omega_{1}-\omega_{2}))}\right]^{1/p} \leq \nu=[1/ K]^{1/p},
$$
and, as a consequence,  $r=\min \{\kappa,\; \rho\}$. Therefore, the statements of the theorem follow from
 Theorem~\ref{th:nt}.
\end{proof}
\begin{remark}
For $p=1$ in the previous theorem, we obtain the convergence of the
inexact Gauss-Newton like methods  under a Lipschitz condition, as obtained in Theorem~16 of \cite{MAX3} with $c=0$.
\end{remark}
Below, we present a theorem corresponding to Theorem \ref{th:nt} under  Smale's condition.
\begin{theorem}\label{theo:Smale}
Let $\banacha$ and $\banachb$ be  Hilbert spaces,
 $\Omega\subseteq \banacha$ be an open set and
$F:{\Omega}\to \banachb$  an analytic function such that $F'$  has a closed image in $\Omega$. Let $x_* \in \Omega,$ $R>0$,
$
\beta:=\|F'(x_*)^{\dagger}\| $ and $ \kappa:=\sup \left\{ t\in [0, R): B(x_*, t)\subset\Omega \right\}.
$
Suppose that $F(x_*)=0$,  $F '(x_*)$ is injective and
$$
 \gamma := \sup _{ n > 1 }\beta\left\| \frac
{F^{(n)}(x_*)}{n !}\right\|^{1/(n-1)}<+\infty.
$$
Take $0\leq \vartheta<1$, $0\leq \omega_{2}<\omega_{1}$ such that  $\omega_{1}\vartheta+\omega_{2}<1 $. Let
$a:=(1+\vartheta)\omega_{1}$, $b:=(1-\omega_{1}\vartheta-\omega_{2})$  and
$$
r:=\min \left\{\kappa, \frac{a+4b-\sqrt{(a+4b)^2-8b^2}}{4b\gamma}\right\}.
$$
Consider the inexact Gauss-Newton like methods, with initial point $x_0\in
B(x_*, r)\backslash \{x_*\}$, defined by
$$
x_{k+1}={x_k}+S_k, \qquad  B(x_k)S_k=-F'(x_k)^*F(x_k)+r_{k}, \qquad
\; k=0,1,\ldots,
$$
where $B(x_k)$ is an invertible approximation of $F'(x_k)^*F'(x_k)$ satisfying the following conditions
$$
\|B(x_k)^{-1}F'(x_k)^*F'(x_k)\| \leq \omega_{1}, \qquad
\|B(x_k)^{-1}F'(x_k)^*F'(x_k)-I\| \leq \omega_{2}, \qquad \;
k=0,1,\ldots,
$$
and the residual tolerance, $r_k$, the forcing term, $\theta_k$, and
the preconditioning invertible matrix, $P_{k}$,  are such that
$$
\|P_{k}r_{k}\|\leq \theta_{k}\|P_{k}F'(x_{k})^*F(x_{k})\|, \qquad
0\leq \theta_{k}\mbox{cond}(P_{k}F'(x_{k})^*F'(x_{k}))\leq
\vartheta,\qquad \; k=0,1,\ldots.$$
 Define the scalar sequence
$\{t_k\}$, with initial point  $t_0=\|x_0-x_*\|$, by
$$
   t_{k+1} =\frac{(1+\vartheta)\omega_{1} \gamma t_{k}^{2}}{2(1-\gamma\,t_k)^2-1}+(\omega_{1}\vartheta+\omega_{2})t_k, \qquad
   k=0,1,\ldots\,.
$$
Then, the sequences $\{x_k\}$  and $\{t_k\}$ are well defined; $\{t_k\}$  is strictly decreasing, contained in $(0, r)$ and it converges to $0$. Furthermore, $\{x_k\}$ is contained in $B(x_*,r)$, it converges to  $x_*$,
$$
\|x_{k+1}-x_*\| \leq
\frac{\gamma}{2(1-\gamma \|x_0-x_*\|)^2-1}\|x_k-x_*\|^2+\left(\omega_{1}\vartheta+\omega_{2}\right){\|x_k-x_*\|},
\qquad k=0,1,\ldots,
$$
and
$$
\|x_{k}-x_*\|\leq t_k, \qquad k=0,1, \ldots.
$$
\end{theorem}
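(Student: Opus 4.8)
As in the proof of Theorem~\ref{th:HV}, the strategy is to exhibit an explicit majorant function and then invoke Theorem~\ref{th:nt}. I would take
$$
f(t)=\frac{t}{1-\gamma t}-2t,
$$
regarded on $[0,\min\{R,1/\gamma\})$; replacing $R$ by $\min\{R,1/\gamma\}$ costs nothing, since the radius $r$ produced below is $<1/\gamma$. Differentiating gives $f'(t)=1/(1-\gamma t)^2-2$ and $f''(t)=2\gamma/(1-\gamma t)^3$, so that $f(0)=0$ and $f'(0)=-1$ (this is \textbf{h1}), $f'$ is strictly increasing on the domain (\textbf{h2}), and $f'$ is convex, whence \textbf{h3} holds with $p=1$.

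The substantive step is verifying the majorant condition \eqref{Hyp:MH}. Since $F$ is analytic, for $\|x-x_*\|<1/\gamma$ one has the convergent expansion $F'(x)=\sum_{n\ge1}\frac{F^{(n)}(x_*)}{(n-1)!}(x-x_*)^{n-1}$, hence, with $y=x_*+\tau(x-x_*)$,
$$
F'(x)-F'(y)=\sum_{n\ge2}\frac{F^{(n)}(x_*)}{(n-1)!}(x-x_*)^{n-1}\bigl(1-\tau^{n-1}\bigr).
$$
From the definition of $\gamma$ we have $\beta\|F^{(n)}(x_*)\|/(n-1)!=n\,\beta\|F^{(n)}(x_*)/n!\|\le n\gamma^{n-1}$, so that
$$
\beta\|F'(x)-F'(y)\|\le\sum_{n\ge2}n\gamma^{n-1}\|x-x_*\|^{n-1}\bigl(1-\tau^{n-1}\bigr);
$$
on the other hand, expanding $1/(1-s)^2=\sum_{n\ge1}ns^{n-1}$ shows that $f'(\|x-x_*\|)-f'(\tau\|x-x_*\|)$ equals exactly this same series, which establishes \eqref{Hyp:MH}. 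This is the classical Smale-type derivative estimate (cf.\ \cite{S86}); I expect this series bound to be the only real obstacle, the remainder being routine.

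Finally, I would identify the scalar data of Theorem~\ref{th:nt} with that of the statement. Since $f'(t)<0$ precisely when $(1-\gamma t)^2>1/2$, one gets $\nu=(2-\sqrt2)/(2\gamma)$. A short computation gives $f(t)-tf'(t)=-\gamma t^2/(1-\gamma t)^2$ and $tf'(t)=t\bigl(1-2(1-\gamma t)^2\bigr)/(1-\gamma t)^2$, hence, for $t\in(0,\nu)$,
$$
\frac{f(t)/f'(t)-t}{t}=\frac{\gamma t}{2(1-\gamma t)^2-1},\qquad |n_f(t)|=\frac{\gamma t^2}{2(1-\gamma t)^2-1}.
$$
Substituting these into the recursion and into \eqref{cd12}--\eqref{cd1} (with $p=1$ and $t_0=\|x_0-x_*\|$) reproduces the sequence $\{t_k\}$ and the two estimates displayed in Theorem~\ref{theo:Smale}. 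Moreover the map $t\mapsto\gamma t/(2(1-\gamma t)^2-1)$ is strictly increasing on $(0,\nu)$ (its numerator increases and its denominator decreases to $0$ as $t\uparrow\nu$), so, writing $a=(1+\vartheta)\omega_1$ and $b=1-\omega_1\vartheta-\omega_2$, the inequality $a\gamma t/(2(1-\gamma t)^2-1)+\omega_1\vartheta+\omega_2<1$ holds exactly for $\gamma t$ below the smaller root of $2bs^2-(a+4b)s+b=0$; solving this quadratic yields $\rho=(a+4b-\sqrt{(a+4b)^2-8b^2})/(4b\gamma)<\nu$, so $r=\min\{\kappa,\rho\}$ coincides with the radius in Theorem~\ref{theo:Smale}. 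All remaining conclusions then follow at once from Theorem~\ref{th:nt}.
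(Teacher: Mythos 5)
Your proof is correct and follows the same route as the paper, which merely names the majorant function $f(t)=t/(1-\gamma t)-2t$ and defers the verification to Theorem~18 of \cite{MAX3}; you have in fact supplied the details the paper omits (the power-series estimate establishing \eqref{Hyp:MH}, the convexity of $f'$ giving {\bf h3} with $p=1$, and the explicit computation of $\nu$, $n_f$ and $\rho$ matching the stated radius). The one discrepancy is that substituting into \eqref{cd12} yields the quadratic bound with the factor $(1+\vartheta)\omega_{1}$ multiplying $\gamma/\left(2(1-\gamma\|x_0-x_*\|)^2-1\right)$, so the estimate displayed in Theorem~\ref{theo:Smale} appears to have dropped that factor --- a typo in the statement rather than a gap in your argument.
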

\begin{proof}
In this case, the  real function, $f:[0,1/\gamma) \to \mathbb{R}$,
defined by $ f(t)={t}/{(1-\gamma t)}-2t$, is a majorant function for
the function $F$ on $B(x_*, 1/\gamma)$. Hence, as  $f$ has a convex
derivative, the proof follows the same pattern as outlined in
Theorem~18 of \cite{MAX3}.
\end{proof}

\subsection{Convergence result  under a generalized Lipschitz condition}
In this section, we  present a local convergence theorem for the
inexact Gauss-Newton like methods under a generalized Lipschitz
condition according to X.Wang (see \cite{huangy2004,XW10}). It is
worth to point out that the result in this section does not assume
that the function defining the generalized Lipschitz condition  is
nondecreasing. Thus,  Theorem~3.3 in \cite{AAA} is generalized for
the zero-residual case.
\begin{theorem} \label{th:XWT}
Let $\banacha$ and $\banachb$ be  Hilbert spaces,
 $\Omega\subseteq \banacha$ be an open set and
$F:{\Omega}\to \banachb$ be a continuously differentiable
function such that $F'$  has a closed image in $\Omega$. Let $x_* \in \Omega,$ $R>0$,
$
\beta:=\|F'(x_*)^{\dagger}\| $ and $ \kappa:=\sup \left\{ t\in [0, R): B(x_*, t)\subset\Omega \right\}.
$
Suppose that $F(x_*)=0$,
$F '(x_*)$ is injective
and there exists a  positive  integrable function $L:[0,\; R)\to \mathbb{R}$ such that
\begin{equation}\label{Hyp:XW}
\beta\left\|F'(x)-F'(x_*+\tau(x-x_*))\right\| \leq  \int^{\|x-x_*\|}_{\tau\|x-x_*\|} L(u){\rm d}u,
\end{equation}
for all $\tau \in [0,1]$, $x\in B(x_*, \kappa)$. Let
$
\bar{\nu}:=\sup \{t\in [0, R): \displaystyle \int_{0}^{t}L(u){\rm d}u-1 < 0\},
$
$$
\bar{\rho}:=\sup \left\{t\in (0, \delta):
\frac{(1+\vartheta)\omega_{1}\int^{t_k}_{0}L(u)u {\rm d}u}{t\left(1-\int^{t_k}_{0}L(u){\rm d}u\right)}+\omega_{1}\vartheta+\omega_{2}<1, \; t\in (0, \delta)\right\},
\qquad
\bar{r}=\min \left\{\kappa, \bar{\rho}\right\}.
$$
Consider the inexact Gauss-Newton like methods, with initial point $x_0\in
B(x_*, r)\backslash \{x_*\}$, defined by
$$
x_{k+1}={x_k}+S_k, \qquad  B(x_k)S_k=-F'(x_k)^*F(x_k)+r_{k}, \qquad
\; k=0,1,\ldots,
$$
where $B(x_k)$ is an invertible approximation of $F'(x_k)^*F'(x_k)$ satisfying the following conditions
$$
\|B(x_k)^{-1}F'(x_k)^*F'(x_k)\| \leq \omega_{1}, \qquad
\|B(x_k)^{-1}F'(x_k)^*F'(x_k)-I\| \leq \omega_{2}, \qquad \;
k=0,1,\ldots,
$$
and the residual tolerance, $r_k$, the forcing term, $\theta_k$, and
the preconditioning invertible matrix, $P_{k}$,  are such that
$$
\|P_{k}r_{k}\|\leq \theta_{k}\|P_{k}F'(x_{k})^*F(x_{k})\|, \qquad
0\leq \theta_{k}\mbox{cond}(P_{k}F'(x_{k})^*F'(x_{k}))\leq
\vartheta,\qquad \; k=0,1,\ldots.$$
 Define the scalar sequence
$\{t_k\}$, with initial point  $t_0=\|x_0-x_*\|$, by
$$
   t_{k+1} =\frac{(1+\vartheta)\omega_{1}\int^{t_k}_{0}L(u)u {\rm d}u}{1-\int^{t_k}_{0}L(u){\rm d}u}+(\omega_{1}\vartheta+\omega_{2})t_k, \qquad
   k=0,1,\ldots\,.
$$
Then, the sequences $\{x_k\}$  and $\{t_k\}$ are well defined; $\{t_k\}$  is strictly decreasing, contained in $(0, r)$ and it converges to $0$. Furthermore, $\{x_k\}$ is contained in $B(x_*,r)$, it converges to  $x_*$ and there~hold:
$$
    \limsup_{k \to \infty} \; [{\|x_{k+1}-x_*\|}\big{/}{\|x_k-x_*\|}]\leq\omega_{1}\vartheta+\omega_{2},\qquad \lim_{k \to \infty}[{t_{k+1}}\big{/}{t_k}]=\omega_{1}\vartheta+\omega_{2}.
$$
If, additionally, given $0\leq p\leq1$
\begin{itemize}
  \item[{ ${\bf h)}$}]  the function   $(0,\, \nu) \ni t \mapsto t^{1-p}L(t)$ is nondecreasing,
\end{itemize}
then the sequences  $\{x_k\}$  and $\{t_k\}$ satisfy
$$
\|x_{k+1}-x_*\| \leq
(1+\vartheta)\omega_{1}\left(\frac{\int^{t_0}_{0}L(u)u{\rm d}u}{{t_0}^{p+1}\left(1-\int^{t_0}_{0}L(u){\rm d}u\right)}\right)\|x_k-x_*\|^{p+1}+\left(\omega_{1}\vartheta+\omega_{2}\right){\|x_k-x_*\|},
$$
for all $\; k=0,1,\ldots,$ and
$$
\|x_{k}-x_*\|\leq t_k, \qquad k=0,1, \ldots.
$$
\end{theorem}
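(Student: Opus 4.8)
The plan is to deduce Theorem~\ref{th:XWT} from Theorem~\ref{th:nt} by exhibiting the majorant function canonically associated with $L$, and then simply invoking that theorem. Concretely, I would define $f:[0,R)\to\mathbb{R}$ by $f(t):=-t+\int_0^t(t-u)L(u)\,{\rm d}u$, equivalently by $f(0)=0$ and $f'(t)=-1+\int_0^t L(u)\,{\rm d}u$. Since $L$ is integrable, $t\mapsto\int_0^tL(u)\,{\rm d}u$ is continuous, so $f$ is continuously differentiable on $[0,R)$. Condition {\bf h1} is immediate; {\bf h2} holds because $L$ is positive, which makes $f'$ strictly increasing; and \eqref{Hyp:MH} is precisely \eqref{Hyp:XW}, because $f'(\|x-x_*\|)-f'(\tau\|x-x_*\|)=\int_{\tau\|x-x_*\|}^{\|x-x_*\|}L(u)\,{\rm d}u$.

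Next I would carry out the (routine) identification of all the constants. A direct computation gives $f(t)-tf'(t)=-\int_0^t uL(u)\,{\rm d}u$, whence, on $(0,\bar\nu)$,
$$
\frac{f(t)}{f'(t)}-t=\frac{\int_0^t uL(u)\,{\rm d}u}{1-\int_0^t L(u)\,{\rm d}u}=|n_f(t)|.
$$
Substituting this into the quantities defined in Theorem~\ref{th:nt} shows $\nu=\bar\nu$, $\rho=\bar\rho$, $r=\bar r$, that the scalar sequence $\{t_k\}$ defined here is exactly the one of Theorem~\ref{th:nt}, and (using $\frac{f(t_0)-t_0f'(t_0)}{t_0^{p+1}f'(t_0)}=\frac{\int_0^{t_0}uL(u)\,{\rm d}u}{t_0^{p+1}(1-\int_0^{t_0}L(u)\,{\rm d}u)}$) that \eqref{cd12} becomes the inequality claimed here. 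Hence the well-definedness, monotonicity, convergence, and linear-rate statements follow at once.

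Finally, under the extra hypothesis {\bf h}, I would verify {\bf h3}. Put $\varphi(u):=u^{1-p}L(u)$, so that $uL(u)=u^p\varphi(u)$ and $\varphi$ is positive and nondecreasing on $(0,\bar\nu)$. The crux of the argument — and the only step that is not a formal rewriting — is the elementary claim that $t\mapsto t^{-(p+1)}\int_0^t u^p\varphi(u)\,{\rm d}u$ is nondecreasing on $(0,\bar\nu)$: since $\varphi(u)\le\varphi(t)$ for $0\le u\le t$ one has $\int_0^t u^p\varphi(u)\,{\rm d}u\le\varphi(t)\int_0^t u^p\,{\rm d}u=\varphi(t)t^{p+1}/(p+1)$, and differentiating $t^{-(p+1)}\int_0^t u^p\varphi(u)\,{\rm d}u$ gives $t^{-(p+2)}\left[\,t^{p+1}\varphi(t)-(p+1)\int_0^t u^p\varphi(u)\,{\rm d}u\,\right]\ge 0$. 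Writing
$$
\frac{f(t)/f'(t)-t}{t^{p+1}}=\left(t^{-(p+1)}\int_0^t u^p\varphi(u)\,{\rm d}u\right)\cdot\frac{1}{1-\int_0^t L(u)\,{\rm d}u},
$$
the first factor is positive and nondecreasing, while the second is positive and strictly increasing on $(0,\bar\nu)$ (as $\int_0^tL(u)\,{\rm d}u$ is strictly increasing); their product is therefore strictly increasing, so {\bf h3} holds with the given $p$, and Theorem~\ref{th:nt} then yields the two remaining displayed inequalities, completing the proof.
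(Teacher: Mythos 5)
Your proof is correct and follows essentially the same route as the paper: you construct the same majorant function $\bar f(t)=\int_0^t L(u)(t-u)\,{\rm d}u-t$, verify \eqref{Hyp:MH}, {\bf h1}, {\bf h2}, identify the constants and the scalar sequence, and reduce everything to Theorem~\ref{th:nt}. The only difference is that where the paper invokes Lemma~2.2 of \cite{XW9} to get that $t\mapsto t^{-(p+1)}\int_0^t uL(u)\,{\rm d}u$ is nondecreasing under {\bf h}, you supply a short self-contained integral estimate for that fact, which is a perfectly valid (and arguably preferable) substitute.
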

\begin{proof}
Let  ${\bar f}:[0, \kappa)\to \mathbb{R}$ be a differentiable function defined by
$$
{\bar f}(t)=\int_{0}^{t}L(u)(t-u){\rm d}u-t.
$$
Note  that the derivative of the function $f$ is given by
$$
 {\bar f}'(t)=\int_{0}^{t}L(u){\rm d}u-1.
$$
Since $L$ is integrable, ${\bar f}'$ is continuous (in fact ${\bar f}'$ is absolutely continuous). Hence, it is easy to see that \eqref{Hyp:XW} becomes
 \eqref{Hyp:MH} with $f'={\bar f}'$. Moreover, since $L$ is positive, the function $f={\bar f}$  satisfies
the conditions  {\bf h1} and  {\bf h2} in Theorem \ref{th:nt}. Direct algebraic manipulation yields
$$
\frac{1}{t^{p+1}} \left[\frac{{\bar f}(t)}{{\bar f}'(t)}-t\right]=\left[
\frac{1}{t^{p+1}}\displaystyle\int^{t}_{0}L(u)u{\rm d}u\right]
\frac{1}{|{\bar f}'(t)|}.
$$
If assumption {\bf  h} holds, then Lemma~$2.2$ of \cite{XW9} implies
that the first term on the right hand side of the above equation is
nondecreasing
  in $(0,\, \nu)$. Now,  since $1/|{\bar f}'|$ is  strictly  increasing in $(0,\, \nu)$, the above equation implies that {\bf  h3} in Theorem~\ref{th:nt},
  with $f={\bar f}$, also holds.
Therefore, the result  follows from Theorem~\ref{th:nt} with $f={\bar f}$, $\nu=\bar{\nu}$, $\rho=\bar{\rho}$ and $r=\bar{r}$.
\end{proof}
\begin{remark}
If $f'$ in Theorem~\ref{th:nt}  is convex,
the inequalities  \eqref{Hyp:MH} and \eqref{Hyp:XW} are equivalent. Otherwise,
the equivalence does not hold.
For more details see the discussion on pp. 1522 of \cite{F10} or remark~4 in~\cite{MAX5}.
\end{remark}

\section{Final remarks } \label{fr}
We presented, under a milder majorant condition,  a local convergence of  inexact Gauss-Newton like methods for solving
injective-overdetermined systems of equations.
Our main theorem was proved  without the  additional assumption that the majorant function  has convex derivative.
Among other things, the lack of this assumption allows us to obtain two new important special cases, namely, the convergence was ensured under H\"{o}lder-like  and  generalized Lipschitz conditions.

It would be interesting  to study results of semi-local convergence for the inexact Gauss-Newton like methods under a similar majorant condition. This analysis will be carried out in the future.



\def\cprime{$'$}

\end{document}